\documentclass[12pt]{article}

\usepackage{amsmath,amssymb,amsthm}
\usepackage{geometry}
\usepackage{hyperref}

\usepackage{authblk}

\title{\bf A Formal Group Perspective on the Riemann Zeta Function}

\author{\Large Takao Inou\'{e}}

\affil{\large Faculty of Informatics, Yamato University, \\ Osaka, Japan\footnote{Email: inoue.takao@yamato-u.ac.jp; \\ Personal Email: takaoapple@gmail.com \\ [I prefer my personal email address for correspondence.]}} 

\date{February 22, 2026}

\newtheorem{definition}{Definition}[section]
\newtheorem{lemma}[definition]{Lemma}
\newtheorem{proposition}[definition]{Proposition}
\newtheorem{theorem}[definition]{Theorem}
\newtheorem{remark}[definition]{Remark}

\begin{document}
\maketitle

\begin{abstract}
We develop a formal group--theoretic framework for the Riemann zeta function
by treating its Euler product as an element of the multiplicative formal group
$\widehat{\mathbb{G}}_m$ and its logarithm as the associated formal group logarithm.
This perspective allows the multiplicative prime-wise structure of the Euler product
to be systematically linearized within a purely algebraic setting.

As a first step, we construct a minimal concrete model based on a finite cutoff
of the Euler product.
After a one-variable formalization and a purely formal completion procedure
via evenization, we obtain a completed cutoff zeta function
exhibiting an intrinsic symmetry analogous to the functional equation,
without invoking analytic continuation or Gamma factors.
A natural normalization then yields a Gaussian leading term
in the logarithmic expansion,
while higher-order terms form a hierarchy of cumulants.

We show that this Gaussian behavior is not probabilistic in origin
but arises inevitably from the infinitesimal geometry
of the multiplicative formal group after linearization and quadratic normalization.
The higher cumulants encode deviations from this universal quadratic structure
and admit an explicit decomposition governed by weighted integrals
of the Chebyshev error function $\theta(x)-x$.
This decomposition clarifies how prime-distribution irregularities
enter the formal expansion as higher-order corrections.

The second part of the paper provides a systematic formal group
axiomatization of these constructions.
Evenization is interpreted as a canonical normalization
in logarithmic coordinates,
and the Gaussian limit is identified with the universal quadratic structure
on the tangent space of the formal group.
This places the cutoff model and cumulant hierarchy
within a coherent algebraic and geometric framework.

The resulting approach is deliberately non-spectral.
Rather than postulating an operator whose spectrum realizes
the nontrivial zeros of the zeta function,
we offer a structural reorganization of the Euler product
that separates the dominant Gaussian geometry
from arithmetic fluctuations.
This formal group viewpoint provides a new organizing principle
for understanding structural constraints related to the Riemann hypothesis.
Finally, we briefly indicate how the purely multiplicative nature
of the Euler product suggests possible connections
with absolute arithmetic and geometry over the field with one element $\mathbb{F}_1$.
\end{abstract}

\noindent Keywords: 
Riemann zeta function;
formal groups;
Euler product;
Gaussian limit;
cumulant hierarchy;
prime distribution;
absolute arithmetic

\medskip

\noindent MSC2020: 
Primary 11M06;
Secondary 14L05, 11N05, 14G10

\tableofcontents

\section{Introduction}

The Riemann zeta function occupies a central position in number theory,
not only because of its deep connection with the distribution of prime numbers,
but also because of the structural richness of its Euler product
\[
\zeta(s)=\prod_{p}(1-p^{-s})^{-1}.
\]
Most approaches to the Riemann hypothesis reinterpret this function
through analytic continuation, spectral theory, or geometric constructions.
In contrast, the present work begins from a different guiding question:

\medskip

\emph{What structural information about the Riemann zeta function
is already encoded formally in the Euler product,
before analytic continuation or spectral interpretation is introduced?}

\medskip

The Euler product is multiplicative in nature.
Taking its logarithm yields an additive expansion
\[
\log\zeta(s)=\sum_{p}\sum_{k\ge1}\frac{1}{k}p^{-ks},
\]
which isolates prime contributions.
This passage from multiplicative to additive structure
strongly resembles the logarithm map of a formal group.
Motivated by this analogy,
we reinterpret the Euler product as a multiplicative formal group element,
and its logarithm as a formal group logarithm.

The first objective of this paper is conceptual:
to reformulate the zeta function within a purely formal algebraic framework.
By introducing a one-variable formalization and an explicit algebraic
evenization procedure,
we construct a ``formal completed zeta''
whose symmetry $u\mapsto -u$ mimics the functional equation,
without invoking analytic continuation or Gamma factors.

The second objective is structural.
We introduce a finite cutoff model $\zeta_P$,
which is entirely algebraic and free of convergence issues.
After normalization by its quadratic coefficient,
the logarithm of the completed cutoff zeta exhibits a Gaussian leading term.
Higher-order coefficients (cumulants) decay at explicit rates as $P\to\infty$.
This produces a hierarchy:
the quadratic term captures the dominant structure,
while higher cumulants encode finer prime-distribution effects.

The third objective concerns fluctuations.
We show that deviations from the Gaussian limit
are governed by weighted integrals of the Chebyshev error function
\[
E(x)=\theta(x)-x,
\qquad
\theta(x)=\sum_{p\le x}\log p.
\]
These fluctuations naturally decompose into
a boundary contribution (sensitive to $E(P)$)
and a bulk contribution (a weighted average of $E(x)$).
Thus the formal group perspective reorganizes prime-distribution
irregularities into a cumulant hierarchy.

This framework is intentionally non-spectral in nature.
Unlike the geometric and noncommutative approaches of
Deninger and Connes,
we do not assume the existence of an operator whose spectrum realizes the zeros.
Instead, we isolate algebraic structures inherent in the Euler product itself.
From this viewpoint, the Riemann hypothesis would correspond
not to an operator-theoretic statement,
but to constraints on the oscillatory behavior and decay
of higher cumulants in the normalized formal expansion.

The aim of the present work is therefore not to resolve the Riemann hypothesis,
but to provide a possible organizing principle:
a formal group reorganization of the Euler product
that separates Gaussian leading behavior
from prime-distribution fluctuations in a structurally transparent way.

The sections \S 2--\S 6 provide a minimal concrete model
underlying the theory developed in this paper.
Starting from a truncated Euler product,
we introduce a purely formal completion and normalization procedure,
which leads naturally to a Gaussian leading term
and a hierarchy of cumulants.
The structural meaning of these constructions
is clarified in the subsequent sections.

The sections \S 2--\S 6 develop a minimal concrete model
that captures the essential structural mechanisms underlying the present theory.
Starting from a truncated Euler product,
we introduce a purely formal completion and normalization procedure,
leading to a Gaussian leading term and a hierarchy of cumulants
governed by prime-distribution fluctuations.

The subsequent sections \S 8--\S 13 provide a systematic
formal group-theoretic axiomatization of these constructions.
In this part, the Euler product is treated explicitly
as an element of the multiplicative formal group,
evenization is interpreted as a canonical normalization
in logarithmic coordinates,
and the Gaussian limit is understood
as the universal quadratic structure
of the infinitesimal geometry after linearization.
Finally, \S 14 summarizes the conceptual implications
of this framework and outlines directions
for further structural development.

To situate the present approach within a broader landscape,
we include a comparative discussion with the
Deninger--Connes program,
which interprets the Riemann zeta function
through spectral and noncommutative geometric frameworks.
Whereas those approaches emphasize operator-theoretic
and geometric realizations of the zeros,
our focus remains on the intrinsic multiplicative structure
of the Euler product and its formal group linearization.
This comparison clarifies both the scope and the limitations
of the formal group viewpoint adopted here.

We then outline several directions for future development.
These include extensions to more general $L$-functions,
refinements of the cumulant hierarchy,
and deeper structural interpretations
of the Gaussian limit obtained after normalization.
The goal is not to provide a final formulation,
but to establish a coherent structural framework
upon which further developments may build.

Two appendices conclude the paper.
The first appendix provides explicit formulas
for low-order cumulants,
demonstrating concretely how prime sums
enter the formal expansion.
The second appendix,
entitled \emph{``A Speculative Note on Formal Groups,
$\mathbb{F}_1$, and the Riemann Hypothesis''},
explores possible conceptual connections
between the formal group linearization developed here
and emerging perspectives in absolute arithmetic.
This final note is explicitly exploratory in nature,
and is intended to indicate a potential horizon
rather than to assert a definitive structural equivalence.

In addition, an appendix is devoted to clarifying
the structural role of the logarithm in this framework.
There we explain why passing from the Euler product
to its logarithm should be understood
not merely as a computational device,
but as the canonical linearization
associated with the multiplicative formal group.
This perspective illuminates the emergence
of the Gaussian leading term
as a universal quadratic structure
on the infinitesimal tangent space.

\section{Formal Euler Products and Logarithmic Linearization}

Let $\mathcal P$ denote the set of all prime numbers.

\begin{definition}[Formal zeta element]
Let $\{X_p\}_{p\in\mathcal P}$ be independent formal variables.
Define
\[
\zeta^{\mathrm{for}} := \prod_{p\in\mathcal P}(1-X_p)^{-1}
\]
as an element of the formal power series ring
$\mathbb Q[[X_p\mid p\in\mathcal P]]$.
\end{definition}

\begin{lemma}[Formal logarithm]
The logarithm of $\zeta^{\mathrm{for}}$ is given by
\[
\log \zeta^{\mathrm{for}} = \sum_{p}\sum_{k\ge1}\frac{1}{k}X_p^k.
\]
\end{lemma}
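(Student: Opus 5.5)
We work in the power series ring $R=\mathbb Q[[X_p\mid p\in\mathcal P]]$, with its $\mathfrak m$-adic topology where $\mathfrak m$ is the ideal generated by all $X_p$. Since $\mathcal P$ is infinite, we read $R$ as the ring of formal sums $\sum_\alpha c_\alpha X^\alpha$ over finitely supported multi-indices. In this ring there are only finitely many monomials of each fixed degree in any finite set of variables, and convergence of infinite products and sums is taken coefficientwise: a family converges if each monomial coefficient stabilizes.

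The plan is to first record that for $f\in 1+\mathfrak m$ the logarithm
\[
\log f := \sum_{n\ge1}\frac{(-1)^{n-1}}{n}(f-1)^n
\]
is well defined, because each monomial coefficient receives contributions from only finitely many $n$. We also need the homomorphism property $\log(fg)=\log f+\log g$ for $f,g\in 1+\mathfrak m$. This follows from the identity $\log((1+x)(1+y))=\log(1+x)+\log(1+y)$ in $\mathbb Q[[x,y]]$, substituting $x=f-1$ and $y=g-1$, which is legitimate since both lie in $\mathfrak m$.

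The key steps, in order, are as follows.
\begin{itemize}
\item[(i)] For a single prime we have $\log(1-X_p)^{-1}=-\log(1-X_p)=\sum_{k\ge1}X_p^k/k$. This is the one-variable identity in $\mathbb Q[[X]]$; it also follows from $\log(1-X)^{-1}=\log(1+(X+X^2+\cdots))$ together with the homomorphism property and $(1-X)(1-X)^{-1}=1$.
\item[(ii)] For a finite set $S$ of primes, apply the homomorphism property repeatedly to get $\log\prod_{p\in S}(1-X_p)^{-1}=\sum_{p\in S}\sum_k X_p^k/k$.
\item[(iii)] Pass to the limit over finite $S\uparrow\mathcal P$.
\end{itemize}

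Step (iii) is the main obstacle. We must justify that $\log$ commutes with the infinite product. Fix a monomial $X^\alpha$, and let $T$ be its finite support. Consider the quotient map that sets every $X_p$ with $p\notin T$ equal to $0$. This map is a continuous ring homomorphism, and it commutes with $\log$, since $\log$ is a coefficientwise-convergent power series in $f-1$.

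Under this map, $\zeta^{\mathrm{for}}$ goes to the finite product over $p\in T$, and the right-hand side goes to the finite sum over $p\in T$. The coefficient of $X^\alpha$ is unaffected by the map. So the two sides agree on the coefficient of $X^\alpha$ by step (ii) applied with $S=T$. Since $\alpha$ was arbitrary, the identity holds in $R$.
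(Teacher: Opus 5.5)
Your proof is correct and follows the same route as the paper, which simply applies $\log(1-Y)^{-1}=\sum_{k\ge1}Y^k/k$ to each factor $(1-X_p)^{-1}$ and adds the results. You also supply what the paper leaves implicit: the additivity of $\log$, and the passage to the infinite product by projecting away the variables outside the support of a fixed monomial.

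One small fix to the wording: take $\mathfrak m$ to be the series with zero constant term, and use the coefficientwise topology throughout. With infinitely many primes, $\zeta^{\mathrm{for}}-1$ is not in the ideal generated by the $X_p$, and the $\mathfrak m$-adic topology of that ideal is not the one your argument actually uses.
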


\begin{proof}
This follows from the formal identity
$\log(1-Y)^{-1}=\sum_{k\ge1}Y^k/k$,
applied termwise to each factor $(1-X_p)^{-1}$.
\end{proof}

\begin{remark}
This logarithm plays the role of a \emph{formal group logarithm},
linearizing multiplicative prime data into an additive structure.
\end{remark}

\section{One-Variable Formalization}

Fix formal weights $L_p$ and introduce a variable $T$.

\begin{definition}
Define the one-variable formal zeta function by
\[
\zeta^{\mathrm{for}}(T):=\prod_{p}(1-e^{-L_pT})^{-1}.
\]
\end{definition}

\begin{lemma}
The logarithm admits the expansion
\[
\log \zeta^{\mathrm{for}}(T)
=
\sum_{p}\sum_{k\ge1}\frac{1}{k}e^{-kL_pT}.
\]
\end{lemma}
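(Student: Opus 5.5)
The plan is to reduce the statement to the formal logarithm lemma of the previous section by a substitution. Set $Y_p := e^{-L_pT}$ and apply the identity $\log(1-Y)^{-1}=\sum_{k\ge1}Y^k/k$ to each factor $(1-Y_p)^{-1}$. Since $Y_p^k=e^{-kL_pT}$, each factor contributes $\sum_{k\ge1}\frac1k e^{-kL_pT}$. Summing over $p$, using that the logarithm turns products into sums, then gives the displayed expansion.

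The only real work is to say in which ring this substitution is legitimate. The natural first choice, $\mathbb Q[[T]]$, fails: the constant term of $e^{-L_pT}$ is $1$, so $1-e^{-L_pT}$ has no inverse there and the series $\sum_k Y_p^k/k$ does not converge $T$-adically. I would therefore work in one of two ways.

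\emph{Option (a): conditional, per-factor identity.} Treat $T$ as a parameter with $\operatorname{Re}(L_pT)>0$, so that $|e^{-L_pT}|<1$. Then the logarithmic series converges absolutely for each $p$. The finite-product version (for instance the cutoff $p\le P$ used later) is then exact, and the full product holds whenever $\sum_p\sum_k |e^{-kL_pT}|/k<\infty$. Concretely, with $L_p=\log p$ and $T=s$, this is $\operatorname{Re}s>1$, which recovers $\log\zeta(s)$.

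\emph{Option (b): purely formal ring.} Regard $q_p:=e^{-L_pT}$ as formal symbols satisfying $q_p^k=e^{-kL_pT}$, that is, as elements of a completed group ring of the additive group generated by the $L_p$, graded by exponent. In this ring the identity is literally the image of the earlier lemma under the continuous homomorphism $X_p\mapsto q_p$.

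I expect this choice of setting to be the main obstacle: one must check that the map $X_p\mapsto q_p$ is continuous, so that it commutes with $\log$ and with the infinite product and sums. Continuity holds because each monomial $\prod_p q_p^{k_p}$ corresponds to a distinct positive exponent $\sum_p k_pL_p$. For weights like $L_p=\log p$ this follows from unique factorization, and in general it holds under a positivity-and-discreteness assumption on the weights. Once continuity is in place, the lemma follows immediately from the earlier formal logarithm lemma.
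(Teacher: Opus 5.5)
Your proposal is correct and is essentially the paper's own argument: the paper's proof consists solely of substituting $X_p=e^{-L_pT}$ into the formal logarithm lemma of Section~2. Your extra discussion of where this substitution is legitimate goes beyond the paper, which leaves the point implicit. You correctly note that $1-e^{-L_pT}$ is not invertible in $\mathbb{Q}[[T]]$, so one needs either $\operatorname{Re}(L_pT)>0$ or a completion in which only finitely many monomials have exponent $\sum_p k_pL_p$ below any bound. One small correction: that finiteness condition, which positivity and discreteness of the weights guarantee, is what continuity actually requires; mere distinctness of the exponents does not suffice.
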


\begin{proof}
Substitute $X_p=e^{-L_pT}$ in Lemma 1.
\end{proof}

We center the variable at $T=\tfrac12$ by writing $T=\tfrac12+u$.

\begin{definition}
Define coefficients $a_m$ by
\[
\log \zeta^{\mathrm{for}}\!\left(\tfrac12+u\right)
=
\sum_{m\ge0} a_m u^m.
\]
\end{definition}

\section{Formal Completion and Evenization}

\begin{definition}[Odd and even parts]
Let
\[
(\log \zeta)_{\mathrm{odd}} := a_1u+a_3u^3+\cdots.
\]
Define
\[
H(u):=\exp\!\big(-(\log \zeta)_{\mathrm{odd}}\big).
\]
\end{definition}

\begin{definition}[Formal completed zeta]
Define
\[
\xi^{\mathrm{for}}(u):=H(u)\,\zeta^{\mathrm{for}}\!\left(\tfrac12+u\right).
\]
\end{definition}

\begin{theorem}[Formal functional symmetry]
$\xi^{\mathrm{for}}(u)$ is an even formal power series:
\[
\xi^{\mathrm{for}}(u)=\xi^{\mathrm{for}}(-u).
\]
\end{theorem}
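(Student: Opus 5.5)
The plan is to compute $\log \xi^{\mathrm{for}}(u)$ directly and observe that it equals the even part of $\log \zeta^{\mathrm{for}}(\tfrac12+u)$. The argument stays entirely at the level of formal power series in $u$.

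By the Definition of the coefficients $a_m$, we have
\[
\log \zeta^{\mathrm{for}}\!\left(\tfrac12+u\right)=\sum_{m\ge0}a_mu^m = (\log\zeta)_{\mathrm{even}}+(\log\zeta)_{\mathrm{odd}},
\]
where $(\log\zeta)_{\mathrm{even}}:=a_0+a_2u^2+a_4u^4+\cdots$. Since $\exp$ and $\log$ are mutually inverse on formal series (with constant terms handled as below), we can write $\zeta^{\mathrm{for}}(\tfrac12+u)=\exp\big((\log\zeta)_{\mathrm{even}}\big)\exp\big((\log\zeta)_{\mathrm{odd}}\big)$. Multiplying by $H(u)=\exp(-(\log\zeta)_{\mathrm{odd}})$ and using $\exp(A)\exp(B)=\exp(A+B)$ for commuting formal series gives
\[
\xi^{\mathrm{for}}(u)=\exp\big((\log\zeta)_{\mathrm{even}}\big).
\]
Substituting $u\mapsto -u$ is a ring endomorphism of the power series ring and commutes with $\exp$. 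Because the even part is invariant under this substitution, $\xi^{\mathrm{for}}(-u)=\xi^{\mathrm{for}}(u)$ follows.

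The main obstacle is making the objects well defined. I would fix a setting in which the $a_m$ exist, for example a finite set of primes, or treating $e^{-kL_p/2}$ as formal weights in a completed coefficient ring so that each $a_m=\sum_p\sum_k \frac{1}{k}e^{-kL_p/2}(-kL_p)^m/m!$ is a legitimate element. The constant term $a_0$ generally makes $\exp$ of a series with nonzero constant term problematic. I would handle this by factoring it out as a scalar $e^{a_0}$ in the coefficient ring, so that $\exp$ only ever needs to be applied to series with zero constant term, where it is unambiguous. Equivalently, one can simply interpret $\zeta^{\mathrm{for}}(\tfrac12+u)$ as $\exp(\sum a_mu^m)$, which is consistent with Lemma 1 and its one-variable analogue. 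With that convention fixed, the identity above and the evenness are immediate.
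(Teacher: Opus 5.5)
Your proposal is correct and follows the same route as the paper: the factor $H(u)$ removes exactly the odd part of $\log\zeta^{\mathrm{for}}(\tfrac12+u)$, so $\xi^{\mathrm{for}}(u)=\exp\bigl((\log\zeta)_{\mathrm{even}}\bigr)$ is even. Your extra steps (factoring out $e^{a_0}$, and noting that $u\mapsto -u$ is a ring endomorphism commuting with $\exp$) make explicit what the paper's two-line argument leaves implicit.
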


\begin{proof}
By construction, $\log H(u)$ cancels precisely the odd part of
$\log\zeta^{\mathrm{for}}(\tfrac12+u)$.
Thus $\log\xi^{\mathrm{for}}(u)$ contains only even powers of $u$.
\end{proof}

\section{Finite Cutoff Model}

Fix $P\ge2$ and restrict to primes $p\le P$.

\begin{definition}
Define
\[
\zeta_P(T):=\prod_{p\le P}(1-p^{-T})^{-1},
\quad
\xi_P(u):=\text{formal completion of }\zeta_P.
\]
\end{definition}

\begin{lemma}
The logarithm admits the finite expansion
\[
\log\xi_P(u)=a_0(P)+a_2(P)u^2+a_4(P)u^4+\cdots,
\]
where
\[
a_2(P)=\frac12\sum_{p\le P}(\log p)^2\frac{p^{-1/2}}{(1-p^{-1/2})^2},
\]
\[
a_4(P)=\frac1{24}\sum_{p\le P}(\log p)^4
\frac{p^{-1/2}(1+4p^{-1/2}+p^{-1})}{(1-p^{-1/2})^4}.
\]
\end{lemma}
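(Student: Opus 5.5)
The plan is to compute the Taylor coefficients of $\log\zeta_P(\tfrac12+u)$ directly, prime by prime, and then invoke the evenization construction. By the Formal functional symmetry theorem and the definition of $H(u)$, $\log\xi_P(u)=\log H(u)+\log\zeta_P(\tfrac12+u)$ is exactly the even part of $\log\zeta_P(\tfrac12+u)$. So it suffices to show that the even-index coefficients $a_{2j}(P)$ of $\log\zeta_P(\tfrac12+u)=\sum_m a_m(P)u^m$ are given by the stated formulas for $j=1,2$. Since only finitely many primes occur, $\log\zeta_P(\tfrac12+u)$ is a finite sum of functions analytic near $u=0$, and each coefficient is a finite sum over $p\le P$. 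There are no convergence issues; this is the sense of ``finite expansion''.

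Fix a prime $p$, and write $q=p^{-1/2}$ and $L=\log p$, so that $p^{-(1/2+u)}=q\,e^{-Lu}$. Specializing Lemma (one-variable logarithm) with $L_p=\log p$, the contribution of $p$ is $f(x)=-\log(1-x)$ evaluated at $x=q e^{-Lu}$. The key observation is that $\frac{d}{du}=-L\,x\frac{d}{dx}$ along this substitution. Hence the $u^m$-coefficient of the $p$-term is
\[
\frac{(-L)^m}{m!}\Big[(x\tfrac{d}{dx})^m f\Big]_{x=q}.
\]
Using $f(x)=\sum_{k\ge1}x^k/k$, we get $(x\frac{d}{dx})^m f=\sum_{k\ge1}k^{m-1}x^k$ for $m\ge1$. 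For even $m$ the sign $(-L)^m=L^m$ is positive.

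It then remains to sum the two Eulerian-type series:
\[
\sum_{k\ge1}k\,x^k=\frac{x}{(1-x)^2},\qquad \sum_{k\ge1}k^3x^k=\frac{x(1+4x+x^2)}{(1-x)^4}.
\]
Both follow by applying $x\frac{d}{dx}$ repeatedly to $x/(1-x)$. Setting $x=p^{-1/2}$ with $m=2$ and $m=4$, and dividing by $2!$ and $4!$, gives the stated $a_2(P)$ and $a_4(P)$ after summing over $p\le P$. The constant $a_0(P)=-\sum_{p\le P}\log(1-p^{-1/2})$ comes along unchanged, since it is an even term.

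The only step needing care is the bookkeeping: the chain-rule identity $\frac{d}{du}=-Lx\frac{d}{dx}$ and the closed form for $\sum k^3x^k$, where the numerator $1+4x+x^2$ is the Eulerian polynomial. I would verify the latter by differentiating $x(1+x)/(1-x)^3=\sum k^2x^k$ once more. The rest is routine.
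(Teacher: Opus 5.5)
Your proposal is correct and takes essentially the same route as the paper. The paper likewise reads off the even coefficients of $\log\zeta_P(\tfrac12+u)$, Taylor-expands $e^{-k(\log p)u}$ termwise, and sums over $k$ using the closed forms for $\sum_{k\ge1}kx^k$ and $\sum_{k\ge1}k^3x^k$; your operator $x\,\frac{d}{dx}$ is just a compact way of producing the factor $k^{m-1}$ that this termwise expansion yields.
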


\begin{proof}
This follows by explicit Taylor expansion of
$e^{-k(\log p)u}$ and summation over $k$ using
$\sum kx^k$ and $\sum k^3x^k$ identities.
\end{proof}

\section{Normalization and Gaussian Limit}

\begin{definition}[Normalization]
Define
\[
\sigma(P):=\sqrt{a_2(P)},
\quad
\widetilde{\xi}_P(u)
:=
e^{-a_0(P)}\,
\xi_P\!\left(\frac{u}{\sigma(P)}\right).
\]
\end{definition}

\begin{proposition}[Gaussian leading term]
One has
\[
\log\widetilde{\xi}_P(u)
=
u^2+\kappa_4(P)u^4+O(u^6),
\quad
\kappa_4(P):=\frac{a_4(P)}{a_2(P)^2}.
\]
\end{proposition}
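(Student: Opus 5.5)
This is a direct substitution into the even expansion given by the Lemma of the finite cutoff model. That lemma writes
\[
\log\xi_P(u)=a_0(P)+a_2(P)u^2+a_4(P)u^4+\sum_{j\ge3}a_{2j}(P)u^{2j}.
\]
This is a genuine power series in $u$, since only finitely many primes occur. It contains no odd powers, by the Formal functional symmetry theorem applied to the cutoff product.

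The plan has four steps, and the first three are straightforward.

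First, take logarithms in the definition of $\widetilde{\xi}_P$. Since $\log$ converts the product $e^{-a_0(P)}\,\xi_P(u/\sigma(P))$ into a sum, we get
\[
\log\widetilde{\xi}_P(u)=-a_0(P)+\log\xi_P\!\left(\frac{u}{\sigma(P)}\right).
\]

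Second, substitute $u\mapsto u/\sigma(P)$ into the expansion termwise. This is legitimate because it is a rescaling of a formal power series by a nonzero constant.

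Third, read off the coefficients. The constant term $a_0(P)$ cancels. The quadratic term becomes $a_2(P)u^2/\sigma(P)^2=u^2$, because $\sigma(P)^2=a_2(P)$. The quartic term becomes $a_4(P)u^4/a_2(P)^2=\kappa_4(P)u^4$. All remaining terms are $a_{2j}(P)\sigma(P)^{-2j}u^{2j}$ with $j\ge3$, and these are $O(u^6)$ in the formal sense, i.e.\ they lie in $u^6\mathbb{Q}(\sigma(P))[[u]]$.

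The only step needing care is checking that $\sigma(P)$ is well defined and nonzero, so that the rescaling makes sense. From the explicit formula for $a_2(P)$, every summand $(\log p)^2 p^{-1/2}(1-p^{-1/2})^{-2}$ is strictly positive. Since $P\ge2$, the sum contains at least the term $p=2$. Hence $a_2(P)>0$, the real square root $\sigma(P)>0$ exists, and dividing by it is permitted.

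If one wants an analytic rather than formal $O(u^6)$, a further remark is needed. For fixed $P$, $\log\xi_P$ is analytic near $u=0$, since each factor $1-p^{-1/2-u}$ is nonvanishing there. Its Taylor tail beyond degree $4$ is therefore bounded by $C_P|u|^6$ on a small disc. Rescaling by $\sigma(P)$ only changes the constant and the radius of that disc.
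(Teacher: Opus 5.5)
Your proof is correct and follows the paper's own argument: substitute $u\mapsto u/\sigma(P)$ into the even expansion of $\log\xi_P$, let the constant cancel, and use $\sigma(P)^2=a_2(P)$ to normalize the quadratic coefficient to $1$. Your check that $a_2(P)>0$, so that $\sigma(P)$ is well defined, is a useful addition the paper leaves implicit; the one small slip is that the tail lies in $u^6\mathbb{R}[[u]]$, not $u^6\mathbb{Q}(\sigma(P))[[u]]$, because the coefficients $a_{2j}(P)$ involve $\log p$ and $p^{-1/2}$ for all $p\le P$, not just $\sigma(P)$.
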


\begin{proof}
Direct substitution yields
\[
\log\widetilde{\xi}_P(u)
=
\frac{a_2(P)}{\sigma(P)^2}u^2
+
\frac{a_4(P)}{\sigma(P)^4}u^4+\cdots,
\]
and $\sigma(P)^2=a_2(P)$.
\end{proof}

\begin{proposition}[Asymptotic decay of higher cumulants]
Heuristically,
\[
\kappa_4(P)\sim \frac{\log P}{\sqrt P},
\quad
P\to\infty.
\]
\end{proposition}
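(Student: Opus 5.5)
The plan is to compute the leading asymptotics of $a_2(P)$ and $a_4(P)$ separately from the explicit formulas in the lemma of the finite cutoff model, and then take the ratio $\kappa_4(P)=a_4(P)/a_2(P)^2$. I will read the symbol $\sim$ in the statement as ``of the exact order of''. The computation below produces an explicit constant $1/12$, so the precise claim I aim to establish is
\[
\kappa_4(P)\sim\frac{1}{12}\,\frac{\log P}{\sqrt P}.
\]

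\emph{Step 1: reduce to prime sums.} For each prime $p$, the rational factors satisfy
\[
\frac{p^{-1/2}}{(1-p^{-1/2})^2}=p^{-1/2}\bigl(1+O(p^{-1/2})\bigr),
\qquad
\frac{p^{-1/2}(1+4p^{-1/2}+p^{-1})}{(1-p^{-1/2})^4}=p^{-1/2}\bigl(1+O(p^{-1/2})\bigr).
\]
Hence
\[
a_2(P)=\tfrac12 S_2(P)+O\bigl((\log P)^3\bigr),
\qquad
a_4(P)=\tfrac1{24}S_4(P)+O\bigl((\log P)^5\bigr),
\qquad
S_j(P):=\sum_{p\le P}(\log p)^j p^{-1/2}.
\]
The error terms come from $\sum_{p\le P}(\log p)^j p^{-1}\ll(\log P)^j$, which follows from Mertens' estimate. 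They are negligible against the main terms found in Step 2.

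\emph{Step 2: partial summation against $\theta$.} Write
\[
S_j(P)=\int_{2^-}^{P}(\log x)^{j-1}x^{-1/2}\,d\theta(x),
\qquad \theta(x)=x+E(x).
\]
The main term is
\[
\int_2^P(\log x)^{j-1}x^{-1/2}\,dx=2\sqrt P(\log P)^{j-1}\bigl(1+O(1/\log P)\bigr).
\]
The fluctuation is $\int(\log x)^{j-1}x^{-1/2}\,dE(x)$. Integrating by parts, it splits into a boundary term $E(P)(\log P)^{j-1}P^{-1/2}$ and a bulk integral of $E(x)$ against the derivative of the weight. This is exactly the boundary/bulk decomposition announced in the introduction.

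\emph{Step 3: the main obstacle, controlling the fluctuation.} To know that the fluctuation is $o(\sqrt P(\log P)^{j-1})$, I need $E(x)=o(x)$, and in fact a quantitative version. The prime number theorem with classical error term, $E(x)\ll x\exp(-c\sqrt{\log x})$, suffices. With it, both the boundary term and the bulk integral are $\ll\sqrt P(\log P)^{j-1}\exp(-c\sqrt{\log P})$. The input from the distribution of primes is unavoidable here, and this is why the statement is labelled heuristic. Its first-order content uses only the prime number theorem, while finer corrections would require the Riemann hypothesis.

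\emph{Step 4: assemble the ratio.} Steps 1--3 give
\[
a_2(P)\sim\sqrt P\log P,
\qquad
a_4(P)\sim\tfrac1{12}\sqrt P(\log P)^3.
\]
Therefore
\[
\kappa_4(P)=\frac{a_4(P)}{a_2(P)^2}\sim\frac{\sqrt P(\log P)^3}{12\,P(\log P)^2}=\frac{1}{12}\,\frac{\log P}{\sqrt P},
\]
which is the claimed order of decay. The same argument applied to $a_{2m}(P)\sim c_m\sqrt P(\log P)^{2m-1}$ would give the general decay $\kappa_{2m}(P)\asymp(\log P)^{m-1}P^{-(m-1)/2}$.
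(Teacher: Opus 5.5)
Your proof is correct and follows essentially the paper's route: the paper likewise replaces $\sum_{p\le P}(\log p)^m p^{-1/2}$ by $\int_2^P(\log x)^{m-1}x^{-1/2}\,dx$ via the prime number theorem and then forms $a_4/a_2^2$, and you make this rigorous by using Mertens for the $k\ge2$ terms and the classical error term for $E(x)$. Your constant $1/12$ is right, because the paper's heuristic drops the factor $2$ in $\int_2^P(\log x)^{m-1}x^{-1/2}\,dx\sim 2\sqrt P(\log P)^{m-1}$ and the coefficients $\tfrac12,\tfrac1{24}$, so reading the statement's $\sim$ as ``of exact order'' is the correct interpretation.
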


\begin{proof}
Using prime number theorem heuristics,
\[
\sum_{p\le P}(\log p)^m p^{-1/2}
\approx
\int_2^P \frac{(\log x)^m}{\sqrt x\log x}dx
\sim \sqrt P(\log P)^{m-1}.
\]
Substitution yields the stated asymptotic.
\end{proof}

\section{Fluctuation Decomposition}

Let
\[
\theta(x):=\sum_{p\le x}\log p,
\quad
E(x):=\theta(x)-x.
\]

\begin{lemma}
There exist smooth functions $\phi_m$ such that
\[
a_m(P)=\int_2^P \phi_m(x)\,d\theta(x).
\]
\end{lemma}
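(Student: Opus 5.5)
The plan is to write each coefficient $a_m(P)$ as a sum over primes $p\le P$ of a fixed function of $p$, and then read that sum as a Stieltjes integral against $\theta$.

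\textbf{Step 1: the per-prime coefficient.} Expand the logarithm of $\zeta_P(\tfrac12+u)$ prime by prime, as in Lemma 3.2 with $L_p=\log p$. For each $p$ we have
\[
\log(1-p^{-1/2-u})^{-1}=\sum_{k\ge1}\frac1k p^{-k/2}e^{-k(\log p)u}.
\]
Extracting the coefficient of $u^m$ gives
\[
\frac{(-1)^m(\log p)^m}{m!}\sum_{k\ge1}k^{m-1}p^{-k/2}.
\]
This is the same computation as in Lemma 5.2, where the cases $m=2,4$ were carried out. For $m\ge1$ the inner sum is the polylogarithm $\mathrm{Li}_{1-m}(p^{-1/2})$, a rational function of $p^{-1/2}$. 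For $m=0$ it is $-\log(1-p^{-1/2})$. We have $a_m(P)=0$ for odd $m$, because evenization removes the odd part; the formula $\phi_m\equiv0$ then holds trivially. So the lemma concerns the even $m$ retained in $\log\xi_P$. Put $y=x^{-1/2}$ and define
\[
\psi_m(x):=\frac{(-1)^m(\log x)^m}{m!}\,\mathrm{Li}_{1-m}(x^{-1/2}),
\]
with $\psi_0(x)=-\log(1-x^{-1/2})$. Then $a_m(P)=\sum_{p\le P}\psi_m(p)$.

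\textbf{Step 2: smoothness.} Since $x^{-1/2}\le 2^{-1/2}<1$ on $[2,\infty)$, the denominator $(1-y)^{m}$ of $\mathrm{Li}_{1-m}(y)$ never vanishes there. Hence $\psi_m$ is real-analytic, in particular smooth, on $[2,\infty)$, and extends smoothly to a neighbourhood of $x=2$.

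\textbf{Step 3: convert to an integral against $\theta$.} The measure $d\theta$ is a sum of point masses $\log p$ at the primes $p$. So define
\[
\phi_m(x):=\frac{\psi_m(x)}{\log x},
\]
which is smooth on $[2,\infty)$ because $\log x\ge\log 2>0$. Then
\[
\int_{2^-}^P\phi_m(x)\,d\theta(x)=\sum_{p\le P}\phi_m(p)\log p=\sum_{p\le P}\psi_m(p)=a_m(P).
\]

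\textbf{Main obstacle.} The only delicate point is the endpoint $x=2$: $\theta$ jumps there, so the integral must be read as the Lebesgue--Stieltjes integral over $[2,P]$, including the atom at $2$ (equivalently, as $\int_{2^-}^P$). I would state this convention explicitly.

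It is also worth recording the explicit forms, to connect with the coming decomposition into a boundary term $E(P)$ and a bulk integral of $E(x)$. For instance,
\[
\phi_2(x)=\tfrac12\log x\,\frac{x^{-1/2}}{(1-x^{-1/2})^2},
\]
which matches Lemma 5.2. Writing $d\theta=dx+dE$ and integrating by parts will then produce that decomposition.
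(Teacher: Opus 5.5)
Your proposal is correct and is the same argument as the paper's. The paper's proof consists only of the instruction to rewrite the prime sums as Stieltjes integrals against $\theta$, and your choice $\phi_m(x)=\psi_m(x)/\log x$, with $\psi_m(x)=\frac{(-1)^m(\log x)^m}{m!}\sum_{k\ge1}k^{m-1}x^{-k/2}$, is exactly what that instruction amounts to. Your added points fill in details the paper leaves implicit: smoothness on $[2,\infty)$ because $x^{-1/2}<1$ there, and reading the integral as $\int_{2^-}^{P}$ so that the atom of $d\theta$ at $x=2$ is counted.
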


\begin{proof}
Rewrite prime sums as Stieltjes integrals with respect to $\theta(x)$.
\end{proof}

\begin{theorem}[Boundary--bulk decomposition]
One has
\[
a_m(P)=A_m^{\mathrm{main}}(P)+A_m^{\mathrm{fluc}}(P),
\]
where
\[
A_m^{\mathrm{fluc}}(P)
=
\phi_m(P)E(P)-\int_2^P E(x)\phi_m'(x)\,dx.
\]
\end{theorem}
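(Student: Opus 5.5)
Starting from the preceding Lemma, write $a_m(P)=\int_2^P \phi_m(x)\,d\theta(x)$. The plan is to split the measure as $d\theta(x)=dx+dE(x)$, using $\theta(x)=x+E(x)$. This gives
\[
a_m(P)=\int_2^P\phi_m(x)\,dx+\int_2^P\phi_m(x)\,dE(x),
\]
and we set $A_m^{\mathrm{main}}(P):=\int_2^P\phi_m(x)\,dx$. This is the smooth ``prime number theorem'' contribution; it is the integral used heuristically in the Proposition on asymptotic decay. The fluctuation term is then defined as the remaining Stieltjes integral, and what is left is to put it in the stated form.

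For this we integrate by parts in the Riemann--Stieltjes sense. Since $\phi_m$ is smooth ($C^1$ suffices) on $[2,P]$ and $E$ has bounded variation, we get
\[
\int_2^P\phi_m(x)\,dE(x)=\phi_m(P)E(P)-\phi_m(2)E(2^-)-\int_2^P E(x)\phi_m'(x)\,dx.
\]
Here $\theta$ jumps at the primes. This is why the Lemma's integral over $[2,P]$ must be read as including the atom at $x=2$ (and the atom at $P$ if $P$ is prime), so that $a_m(P)$ really contains the $p=2$ and $p\le P$ terms.

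The main obstacle is this endpoint bookkeeping. To make the boundary contribution exactly $\phi_m(P)E(P)$, the cleanest choice is to interpret $\int_2^P$ as $\int_{2^-}^{P}$ with right-continuous $\theta$. Then the lower boundary term involves $E(2^-)=\theta(2^-)-2=-2$. I would absorb the resulting constant $2\phi_m(2)$ into $A_m^{\mathrm{main}}(P)$, since it does not depend on $P$ and carries no fluctuation information. Equivalently, one can define $A_m^{\mathrm{main}}(P):=\int_2^P\phi_m\,dx+2\phi_m(2)$, or start the integral at $x<2$, where $\theta$ vanishes, and adjust. With this normalization the identity $a_m=A_m^{\mathrm{main}}+A_m^{\mathrm{fluc}}$ holds exactly.

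Finally, I would record what $\phi_m$ is, so the decomposition is explicit. Expanding in $u$ as in the cutoff Lemma, $\phi_m(x)$ is $(\log x)^{m-1}$ times the $p$-dependent rational factor evaluated at $p=x$. For example, $\phi_2(x)=\tfrac12(\log x)\,x^{-1/2}(1-x^{-1/2})^{-2}$. These functions are smooth on $[2,\infty)$, which justifies the integration by parts.
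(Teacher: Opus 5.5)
Your proposal is correct and follows the paper's own route: split $d\theta(x)=dx+dE(x)$, take the $dx$ part as $A_m^{\mathrm{main}}(P)$, and integrate the $dE$ part by parts. Your endpoint bookkeeping is a useful addition: the paper's one-line proof silently drops the lower boundary term $-\phi_m(2)E(2^-)=2\phi_m(2)$, and the stated formula for $A_m^{\mathrm{fluc}}(P)$ holds only if this term is absorbed into the paper's never-defined $A_m^{\mathrm{main}}(P)$, exactly as you do.
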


\begin{proof}
Decompose $d\theta(x)=dx+dE(x)$ and integrate by parts.
\end{proof}

\begin{remark}
Thus all fluctuations are governed by weighted integrals of the Chebyshev error $E(x)$.
\end{remark}

\section{Formal Group Preliminaries}

In this section we briefly recall the basic notions of one-dimensional
commutative formal group laws needed in this paper,
restricting attention to the multiplicative formal group.
All constructions are purely formal and take place over
$\mathbb{Q}$-algebras.

\subsection{Formal Group Laws and Strict Isomorphisms}

\begin{definition}
Let $R$ be a commutative ring.
A \emph{one-dimensional commutative formal group law} over $R$
is a formal power series
\[
F(X,Y)\in R[[X,Y]]
\]
satisfying:
\begin{enumerate}
\item $F(X,0)=X$ and $F(0,Y)=Y$,
\item $F(X,Y)=F(Y,X)$,
\item $F(X,F(Y,Z))=F(F(X,Y),Z)$.
\end{enumerate}
\end{definition}

\begin{definition}
Let $F$ and $G$ be formal group laws over $R$.
A \emph{strict isomorphism} from $F$ to $G$
is a formal power series
\[
f(X)\in XR[[X]]
\quad\text{with}\quad f'(0)=1,
\]
such that
\[
f(F(X,Y))=G(f(X),f(Y)).
\]
\end{definition}

Strict isomorphisms represent changes of formal coordinates
that preserve the infinitesimal structure of the formal group.

\subsection{Formal Group Logarithms}

Assume that $R$ is a $\mathbb{Q}$-algebra.

\begin{proposition}
Let $F$ be a one-dimensional commutative formal group law over $R$.
There exists a unique formal power series
\[
\ell_F(X)=X+\sum_{n\ge2} b_n X^n \in R[[X]]
\]
such that
\[
\ell_F(F(X,Y))=\ell_F(X)+\ell_F(Y).
\]
\end{proposition}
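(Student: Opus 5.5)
The plan is the classical construction via the invariant differential. Write $F_2(X,Y):=\partial F/\partial Y(X,Y)$ and set
\[
\omega(X):=\frac{1}{F_2(X,0)} .
\]
From $F(X,Y)=X+Y+(\text{terms of degree}\ge2)$, which follows from axiom (1), we get $F_2(0,0)=1$. Hence $F_2(X,0)$ is a unit in $R[[X]]$ and $\omega(X)=1+\sum_{n\ge1}c_nX^n$. Because $R$ is a $\mathbb{Q}$-algebra, we may integrate termwise and define
\[
\ell_F(X):=\int_0^X\omega(t)\,dt=X+\sum_{n\ge2}\frac{c_{n-1}}{n}X^n .
\]
This has the required shape.

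To prove the homomorphism property, differentiate associativity $F(X,F(Y,Z))=F(F(X,Y),Z)$ with respect to $Z$ and set $Z=0$. Using $F(Y,0)=Y$, this gives
\[
F_2(X,Y)\,F_2(Y,0)=F_2(F(X,Y),0),
\]
that is,
\[
\frac{\partial F}{\partial Y}(X,Y)\,\omega(F(X,Y))=\omega(Y).
\]
Hence
\[
\frac{\partial}{\partial Y}\,\ell_F(F(X,Y))=\ell_F'(Y).
\]
Consequently $D(X,Y):=\ell_F(F(X,Y))-\ell_F(X)-\ell_F(Y)$ satisfies $\partial_Y D=0$. Over a $\mathbb{Q}$-algebra this forces $D$ to depend on $X$ alone, so $D(X,Y)=D(X,0)$. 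Finally $D(X,0)=\ell_F(X)-\ell_F(X)-\ell_F(0)=0$, because $F(X,0)=X$ and $\ell_F(0)=0$. Commutativity, axiom (2), is not needed here.

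For uniqueness, suppose $\ell$ and $\ell'$ are two such series and put $g:=\ell-\ell'$. Then $g(X)=O(X^2)$ and $g(F(X,Y))=g(X)+g(Y)$. Differentiating in $Y$ at $Y=0$ gives
\[
g'(X)\,F_2(X,0)=g'(0)=0.
\]
Since $F_2(X,0)$ is a unit, $g'=0$. As $g(0)=0$ and we are in characteristic zero, $g=0$.

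The only delicate points are formal bookkeeping. First, differentiating a composite of power series requires the chain rule in $R[[X,Y,Z]]$; this is legitimate because all inner series have zero constant term. Second, the step ``$\partial_Y D=0$ implies $D$ is independent of $Y$'' requires dividing by integers, which is exactly where the $\mathbb{Q}$-algebra hypothesis enters. I expect the associativity differentiation identity to be the main step to verify carefully. Everything else follows routinely from it.
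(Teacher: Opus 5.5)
Your proposal is correct and takes the same route as the paper: define the invariant differential $\omega_F(X)=\bigl(\tfrac{\partial F}{\partial Y}(X,0)\bigr)^{-1}dX$ and integrate it termwise, using the $\mathbb{Q}$-algebra hypothesis. You also supply what the paper only asserts: the identity $F_2(X,Y)\,F_2(Y,0)=F_2(F(X,Y),0)$, obtained by differentiating associativity and used for additivity, and the relation $g'(X)\,F_2(X,0)=g'(0)=0$, which makes the uniqueness claim rigorous.
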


\begin{proof}
Over a $\mathbb{Q}$-algebra, the formal differential
\[
\omega_F(X)=\left(\frac{\partial F}{\partial Y}(X,0)\right)^{-1} dX
\]
is well-defined.
Setting
\[
\ell_F(X)=\int_0^X \omega_F(t)
\]
defines a formal power series with $\ell_F'(0)=1$,
which satisfies the stated additivity.
Uniqueness follows from the normalization $\ell_F'(0)=1$.
\end{proof}

The series $\ell_F$ is called the \emph{formal group logarithm} of $F$.
It linearizes the group law in the sense that $F$ becomes additive
in logarithmic coordinates.

\subsection{The Multiplicative Formal Group}

The formal group relevant to this work is the multiplicative formal group.

\begin{definition}
The \emph{multiplicative formal group} $\widehat{\mathbb{G}}_m$
over a ring $R$ is defined by the formal group law
\[
F_{\times}(X,Y)=X+Y+XY.
\]
\end{definition}

This formal group corresponds to the multiplicative group
via the coordinate $X=g-1$.

\begin{proposition}
The formal group logarithm of $\widehat{\mathbb{G}}_m$ over a
$\mathbb{Q}$-algebra is given by
\[
\ell_{\times}(X)=\log(1+X)
=\sum_{n\ge1}(-1)^{n-1}\frac{X^n}{n}.
\]
Its inverse, the formal exponential, is
\[
\exp_{\times}(t)=e^t-1.
\]
\end{proposition}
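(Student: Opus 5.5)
The plan is to use the construction in the proof of the preceding proposition on formal group logarithms and then check the result directly. For $F_\times(X,Y)=X+Y+XY$ one has
\[
\frac{\partial F_\times}{\partial Y}(X,0)=1+X .
\]
This is a unit in $R[[X]]$ with inverse $\sum_{n\ge0}(-1)^nX^n$. The invariant differential is therefore $\omega_{F_\times}=(1+X)^{-1}dX$. Integrating term by term from $0$, which is legitimate because $R$ is a $\mathbb{Q}$-algebra and we may divide by $n$, gives
\[
\ell_\times(X)=\sum_{n\ge1}(-1)^{n-1}X^n/n .
\]
This series has linear coefficient $1$.

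Rather than relying on the general existence argument alone, I would verify additivity directly. The cleanest route is by differentiation. Set
\[
D(X,Y)=\ell_\times(X+Y+XY)-\ell_\times(X)-\ell_\times(Y).
\]
Differentiating in $X$ with the chain rule gives
\[
\partial_X D=\frac{1+Y}{1+X+Y+XY}-\frac{1}{1+X}=0,
\]
since $1+X+Y+XY=(1+X)(1+Y)$. In the same way $\partial_Y D=0$.

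In a power series ring over a $\mathbb{Q}$-algebra, a series whose partial derivatives both vanish equals its constant term. This is because the coefficient of $X^iY^j$ in $\partial_X D$ is $(i+1)$ times a coefficient of $D$, and $i+1$ is invertible. So $D=D(0,0)=0$, which is the required additivity. Uniqueness then follows from the uniqueness clause of the preceding proposition.

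For the exponential, define
\[
E(t)=e^t-1=\sum_{n\ge1}t^n/n! ,
\]
which lies in $tR[[t]]$, so composition with it is defined. I would show $\ell_\times(E(t))=t$ by the same derivative trick. We have
\[
\frac{d}{dt}\ell_\times(E(t))=\frac{E'(t)}{1+E(t)}=\frac{e^t}{e^t}=1,
\]
and the constant term is $0$. A series in $tR[[t]]$ with linear coefficient $1$ has a two-sided compositional inverse. Hence $E$ is the inverse of $\ell_\times$, and moreover $E(s+t)=F_\times(E(s),E(t))$, as expected.

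The only delicate point is justifying formal calculus over an arbitrary $\mathbb{Q}$-algebra. This covers the chain rule for composition of series without constant term, and the fact that vanishing derivatives force a constant. Both reduce to coefficientwise identities with integer denominators, so I expect no real obstacle beyond stating them carefully.
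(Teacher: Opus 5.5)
Your proof is correct and rests on the same point as the paper's one-line argument: the identity $1+F_{\times}(X,Y)=(1+X)(1+Y)$ turns additivity of $\ell_{\times}$ into the functional equation of $\log$. The paper asserts that functional equation as a ``direct computation,'' whereas you derive the series from $\omega_{F_{\times}}=(1+X)^{-1}\,dX$, justify the identity formally via $\partial_X D=\partial_Y D=0$ over a $\mathbb{Q}$-algebra, and also prove the second claim $\ell_{\times}(e^t-1)=t$ (so $\exp_{\times}=\ell_{\times}^{-1}$), which the paper's proof does not address at all.
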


\begin{proof}
A direct computation shows that
\[
\log\bigl((1+X)(1+Y)\bigr)=\log(1+X)+\log(1+Y),
\]
which is equivalent to
\[
\ell_{\times}(F_{\times}(X,Y))
=
\ell_{\times}(X)+\ell_{\times}(Y).
\]
\end{proof}

Thus the multiplicative formal group is canonically linearized
by the usual logarithm.

\subsection{Euler Factors as Formal Group Elements}

The relevance of $\widehat{\mathbb{G}}_m$ to the Riemann zeta function
comes from the structure of the Euler product.

For each prime $p$ and a formal parameter $T$,
set
\[
u_p(T):=p^{-T}.
\]
Define the associated formal group coordinate
\[
X_p(T):=-u_p(T).
\]
Then
\[
1+X_p(T)=1-p^{-T}.
\]

\begin{remark}
With this identification, each Euler factor
\[
(1-p^{-T})^{-1}
\]
corresponds to the inverse of a multiplicative formal group element.
Applying the formal group logarithm yields
\[
-\ell_{\times}(X_p(T))
=
-\log(1-p^{-T})
=
\sum_{k\ge1}\frac{1}{k}p^{-kT},
\]
which is precisely the standard logarithmic expansion
of the Euler factor.
\end{remark}

This observation provides the formal group-theoretic interpretation
of the passage from the Euler product to its logarithmic expansion:
it is nothing but the additivity of the formal group logarithm
for $\widehat{\mathbb{G}}_m$.

\section{Euler Products as Formal Group Elements}

In this section we reinterpret Euler products as elements
of the multiplicative formal group.
This provides a precise formal group-theoretic foundation
for the logarithmic expansions used throughout the paper.

\subsection{Formal Group-Valued Euler Products}

Let $\widehat{\mathbb{G}}_m$ denote the multiplicative formal group
with coordinate $X=g-1$ and group law
\[
F_{\times}(X,Y)=X+Y+XY.
\]

\begin{definition}
Let $T$ be a formal parameter.
For each prime $p$, define
\[
X_p(T):=-p^{-T}.
\]
We define the \emph{formal Euler product element}
by
\[
\mathcal{E}(T)
:=
\prod_{p}(1+X_p(T))^{-1},
\]
interpreted as a formal power series in the variables
$\{X_p(T)\}$.
\end{definition}

Formally, $\mathcal{E}(T)$ is an infinite product of invertible
elements in the completed formal group algebra.
In later sections, this product will be truncated
to ensure complete algebraic rigor.

\subsection{Logarithmic Linearization}

The central structural property of $\mathcal{E}(T)$
is revealed by applying the formal group logarithm.

\begin{proposition}
Let $\ell_{\times}(X)=\log(1+X)$ denote the formal group logarithm
of $\widehat{\mathbb{G}}_m$.
Then
\[
\ell_{\times}\!\bigl(\mathcal{E}(T)-1\bigr)
=
-\sum_{p}\ell_{\times}\!\bigl(X_p(T)\bigr)
=
\sum_{p}\sum_{k\ge1}\frac{1}{k}p^{-kT}.
\]
\end{proposition}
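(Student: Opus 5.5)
The plan is to reduce the statement to two facts already in hand: the additivity of $\ell_{\times}$ (the Proposition computing the logarithm of $\widehat{\mathbb{G}}_m$) and the expansion $\log(1-Y)^{-1}=\sum_{k\ge1}Y^k/k$ (the Formal logarithm Lemma). To keep everything rigorous, I first work with the truncated product
\[
\mathcal{E}_P(T):=\prod_{p\le P}(1+X_p(T))^{-1}
\]
over the finitely many primes $p\le P$. This is a finite product of units of the form $1+(\text{augmentation ideal})$ in $\mathbb{Q}[[X_p]]$. Hence $\mathcal{E}_P(T)-1$ lies in the maximal ideal, and $\ell_{\times}(\mathcal{E}_P(T)-1)=\log\mathcal{E}_P(T)$ is a well-defined formal power series.

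The second step is the finite additivity. Iterating the identity $\ell_{\times}(F_{\times}(X,Y))=\ell_{\times}(X)+\ell_{\times}(Y)$ by induction on the number of factors gives
\[
\log\prod_i(1+Y_i)=\sum_i\log(1+Y_i).
\]
Inverses are handled by $\ell_{\times}(\iota(X))=-\ell_{\times}(X)$, where $\iota(X)=(1+X)^{-1}-1$ is the formal inverse. This identity follows from additivity, since $F_{\times}(X,\iota(X))=0$ and $\ell_{\times}(0)=0$. Together these give
\[
\ell_{\times}(\mathcal{E}_P-1)=-\sum_{p\le P}\ell_{\times}(X_p).
\]
Substituting $X_p=-p^{-T}$ into $-\log(1+X_p)=\sum_k p^{-kT}/k$, exactly as in the Remark on Euler factors, yields the second equality for the truncation.

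The last step passes to the full product, and this is where I expect the main obstacle. One needs a topology in which $\mathcal{E}(T)=\lim_P\mathcal{E}_P(T)$ and in which $\ell_{\times}$ commutes with the limit. I would take the $(X_p)$-adic topology on $\mathbb{Q}[[X_p\mid p\in\mathcal{P}]]$, with the $X_p$ treated as independent variables as in the definition of $\zeta^{\mathrm{for}}$. In that topology the factor for $p$ differs from $1$ only in terms involving $X_p$. For each monomial, only finitely many primes contribute, so the coefficients of $\mathcal{E}_P$ and of $\sum_{p\le P}$ stabilize as $P$ grows. Moreover, $\ell_{\times}$ is given by a power series with no constant term, so it is continuous on the maximal ideal. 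It therefore commutes with these coefficientwise-stabilizing limits. Letting $P\to\infty$ then gives the stated identity. If instead one specializes genuinely to $p^{-T}$ in a single variable, the same argument runs coefficientwise after the substitution $T=\tfrac12+u$. Convergence there is not automatic, though, and the infinite statement is then best read as a formal identity among the $X_p$.
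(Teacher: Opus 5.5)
Your proof is correct and follows the paper's route: additivity of $\ell_{\times}$ together with $\ell_{\times}\bigl((1+X)^{-1}-1\bigr)=-\ell_{\times}(X)$, applied prime by prime and summed. Beyond the paper, you derive the inverse identity from $F_{\times}(X,\iota(X))=0$ and you justify the infinite sum by truncation and a limit, which the paper's proof passes over. One small correction: the topology you actually use is coefficientwise stabilization, not the $\mathfrak{m}$-adic topology for $\mathfrak{m}=(X_p\mid p\in\mathcal{P})$, since in the latter $\mathcal{E}_P$ does not converge to $\mathcal{E}$ ($\mathcal{E}-\mathcal{E}_P$ always contains the linear terms $-X_p$ for $p>P$).
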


\begin{proof}
Since $\ell_{\times}$ is additive with respect to the
formal group law, one has
\[
\ell_{\times}(F_{\times}(X,Y))=\ell_{\times}(X)+\ell_{\times}(Y).
\]
Moreover, for an invertible element $(1+X)^{-1}$,
\[
\ell_{\times}\bigl((1+X)^{-1}-1\bigr)
=
-\ell_{\times}(X).
\]
Applying this identity prime by prime and summing
yields the stated expansion.
\end{proof}

This proposition shows that the classical logarithmic expansion
of the Euler product is nothing but the additivity
of the formal group logarithm for $\widehat{\mathbb{G}}_m$.

\subsection{Finite Truncation and Algebraic Rigor}

To avoid convergence issues, we introduce a finite truncation.

\begin{definition}
For $P\ge2$, define the truncated formal Euler product
\[
\mathcal{E}_P(T)
:=
\prod_{p\le P}(1+X_p(T))^{-1}.
\]
\end{definition}

\begin{proposition}
The truncated logarithm admits the exact finite expansion
\[
\ell_{\times}\!\bigl(\mathcal{E}_P(T)-1\bigr)
=
\sum_{p\le P}\sum_{k\ge1}\frac{1}{k}p^{-kT}.
\]
\end{proposition}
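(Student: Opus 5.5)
The plan is to reduce the statement to the finite additivity of $\ell_{\times}$, using only the formal group law $F_{\times}(X,Y)=X+Y+XY$ and the earlier proposition identifying $\ell_{\times}(X)=\log(1+X)$. First I will fix the ambient ring. Because the product over $p\le P$ is finite, I will work in $R=\mathbb{Q}[[X_p\mid p\le P]]$ with finitely many independent formal variables and substitute $X_p=-p^{-T}$ only at the end. Each factor $(1+X_p)^{-1}=\sum_{n\ge0}(-X_p)^n$ is a unit with constant term $1$. So $\mathcal{E}_P-1$ lies in the maximal ideal, and $\ell_{\times}(\mathcal{E}_P-1)$ is a well-defined formal power series; no convergence question arises.

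Next I will handle inversion. Put $Y_p:=(1+X_p)^{-1}-1$, which lies in the maximal ideal. Then $F_{\times}(X_p,Y_p)=(1+X_p)(1+Y_p)-1=0$. The additivity of $\ell_{\times}$ (the proposition on the formal group logarithm of $\widehat{\mathbb{G}}_m$), together with $\ell_{\times}(0)=0$, gives $\ell_{\times}(Y_p)=-\ell_{\times}(X_p)$. This is exactly the inversion identity already used in the untruncated proposition.

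Then I will iterate the group law. Enumerate the primes $p_1<\dots<p_r\le P$ and set $Z_j:=\prod_{i\le j}(1+Y_{p_i})-1$. By definition of $F_{\times}$, $Z_j=F_{\times}(Z_{j-1},Y_{p_j})$. Induction on $j$ using additivity gives $\ell_{\times}(Z_r)=\sum_{i}\ell_{\times}(Y_{p_i})=-\sum_{p\le P}\ell_{\times}(X_p)$. Associativity and commutativity of $F_{\times}$ make the result independent of the ordering. Finally, expanding $-\ell_{\times}(X_p)=-\log(1+X_p)=\sum_{k\ge1}(-X_p)^k/k$ and substituting $X_p=-p^{-T}$ yields $\sum_{k\ge1}p^{-kT}/k$. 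Summing over $p\le P$ gives the claimed identity.

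The only step needing care is the meaning of ``substituting $X_p=-p^{-T}$'', since $p^{-T}$ has constant term $1$ as a series in $T$, so naive substitution into a power series in $X_p$ is not $T$-adically legitimate. I would resolve this as the paper implicitly does in the earlier lemmas. The identity is proved as an identity in $\mathbb{Q}[[X_p\mid p\le P]]$, and its image under the specialization is read in the ring of Dirichlet-type series $\sum c_n n^{-T}$, with the $n^{-T}$ treated as formal monomials (multiplicative in $n$). In that ring each $n^{-T}$ with $n>1$ is topologically nilpotent with respect to the filtration by $\log n$, so the substitution is a continuous ring map and the identity transfers. The word ``finite'' in the statement refers to the finiteness of the outer sum over $p$, while the inner $k$-sum remains a formal series.
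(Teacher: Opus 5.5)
Your proof is correct and follows the paper's route. The paper's proof just says the argument is identical to the untruncated case: the inversion identity $\ell_{\times}((1+X)^{-1}-1)=-\ell_{\times}(X)$ is applied prime by prime and the results are summed via additivity of $\ell_{\times}$. That is exactly what you do, with the inversion identity derived from $F_{\times}(X_p,Y_p)=0$ and the summation made explicit by induction over the finitely many factors.

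Your treatment of the specialization $X_p\mapsto -p^{-T}$ through formal Dirichlet series is a sound addition rather than a different method; there each $n^{-T}$ with $n>1$ is topologically nilpotent. Note, though, that the paper does not actually do this implicitly in its earlier lemmas---it substitutes $X_p=e^{-L_pT}$ there without comment.
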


\begin{proof}
The proof is identical to that of the infinite case,
since only finitely many formal variables $X_p(T)$
are involved.
\end{proof}

Thus all subsequent manipulations of logarithms,
Taylor expansions, and normalizations
can be carried out entirely within a finite algebraic framework.

\subsection{Formal Interpretation of the Zeta Function}

\begin{remark}
Identifying
\[
\zeta_P(T)=\prod_{p\le P}(1-p^{-T})^{-1}
\]
with $\mathcal{E}_P(T)$,
the Riemann zeta function (or its finite truncation)
may be regarded as a multiplicative formal group element.
Its logarithm is the image under the formal group logarithm,
which linearizes the prime-wise multiplicative structure.
\end{remark}

This formal group viewpoint clarifies the conceptual origin
of later constructions:
Gaussian limits, cumulant hierarchies, and fluctuation analyses
arise after linearization of $\mathcal{E}_P(T)$
via $\ell_{\times}$ and subsequent normalization.

\section{Formal Completion as Evenization}

In this section we introduce a purely formal notion of completion
for Euler products, based on evenization of formal power series.
This construction replaces analytic continuation and Gamma factors
by an explicit algebraic normalization
that enforces a reflection symmetry.

\subsection{Odd--Even Decomposition of Formal Series}

Let $R$ be a commutative $\mathbb{Q}$-algebra.

\begin{definition}
For a formal power series $f(u)\in R[[u]]$, define its even and odd parts by
\[
f_{\mathrm{even}}(u)
=
\frac{f(u)+f(-u)}{2},
\qquad
f_{\mathrm{odd}}(u)
=
\frac{f(u)-f(-u)}{2}.
\]
\end{definition}

Every formal power series admits a unique decomposition
\[
f(u)=f_{\mathrm{even}}(u)+f_{\mathrm{odd}}(u).
\]

\subsection{Evenization via Exponential Correction}

Let $Z(u)\in R[[u]]$ be an invertible formal power series,
that is, $Z(0)\in R^\times$.

\begin{definition}
Define the \emph{evenization operator} by
\[
\mathcal{E}(Z)(u)
:=
Z(u)\exp\!\bigl(-(\log Z(u))_{\mathrm{odd}}\bigr).
\]
\end{definition}

This operator modifies $Z(u)$ by an exponential factor
that removes the odd part of its logarithm.

\begin{theorem}[Existence and uniqueness of evenization]
For any invertible formal power series $Z(u)\in R[[u]]$,
there exists a unique pair $(\Xi(u),O(u))$ such that:
\begin{enumerate}
\item $\Xi(u)$ is an even invertible formal power series,
\item $O(u)$ is an odd formal power series,
\item
\[
Z(u)=\Xi(u)\exp(O(u)).
\]
\end{enumerate}
Moreover, $\Xi(u)=\mathcal{E}(Z)(u)$ and $O(u)=(\log Z(u))_{\mathrm{odd}}$.
\end{theorem}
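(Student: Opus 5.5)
The plan is to reduce everything to the standard bijection between $uR[[u]]$ and $1+uR[[u]]$ given by $\exp$ and $\log$. Over a $\mathbb{Q}$-algebra these are mutually inverse group isomorphisms from $(uR[[u]],+)$ to $(1+uR[[u]],\cdot)$. This is the same linearization as $\ell_\times$ in the Proposition on the formal group logarithm of $\widehat{\mathbb{G}}_m$, applied to $X=Z_1(u)-1$.

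The first step is to make $(\log Z)_{\mathrm{odd}}$ meaningful when $Z(0)$ is merely a unit, since $\log Z(0)$ need not exist in $R$. Write $c:=Z(0)\in R^\times$ and $Z_1(u):=Z(u)/c\in 1+uR[[u]]$. Then define $\log Z(u)$ only up to the constant $\log c$, and set $(\log Z)_{\mathrm{odd}}:=(\log Z_1)_{\mathrm{odd}}$. This is legitimate because a constant term lies in the even part. Equivalently, $(\log Z)_{\mathrm{odd}}=\tfrac12\log\bigl(Z(u)/Z(-u)\bigr)$, which is manifestly independent of any choice. I expect this bookkeeping to be the main subtle point, since it is exactly where "invertible" rather than "constant term $1$" enters. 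Everything else is formal.

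For existence, put $O:=(\log Z_1)_{\mathrm{odd}}$, which is odd, and $\Xi:=\mathcal{E}(Z)=Z\exp(-O)$. Then
\[
\Xi(u)=c\exp\bigl(\log Z_1(u)-O(u)\bigr)=c\exp\bigl((\log Z_1)_{\mathrm{even}}(u)\bigr).
\]
The exponent is even with zero constant term, so $\Xi$ is even. It is also invertible, since $\Xi(0)=c$. Finally $Z=\Xi\exp(O)$ holds by construction.

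For uniqueness, suppose $Z=\Xi_1\exp(O_1)=\Xi_2\exp(O_2)$ with each $\Xi_i$ even and invertible and each $O_i$ odd. Then $D:=O_2-O_1$ is odd, so $D(0)=0$, and $\exp(D)=\Xi_1/\Xi_2$ is even. Hence $\exp(D(u))=\exp(D(-u))=\exp(-D(u))$, which gives $\exp(2D)=1$. Applying $\log$ and using $\log\circ\exp=\mathrm{id}$ on $uR[[u]]$ yields $2D=0$, so $D=0$ because $2$ is invertible in $R$. Therefore $O_1=O_2$, and then $\Xi_1=\Xi_2$. This also identifies the unique pair with $(\mathcal{E}(Z),(\log Z)_{\mathrm{odd}})$, as claimed.
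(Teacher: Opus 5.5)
Your proof is correct and follows the paper's route: pass to $\log Z$, split it into even and odd parts, and exponentiate the even part. You are more careful than the paper, which takes $\log Z$ without noting that $\log Z(0)$ need not exist in $R$ when $Z(0)$ is merely a unit, and your argument via $\exp(2D)=1$ cleanly replaces the paper's one-line appeal to uniqueness of the odd--even decomposition, an appeal that tacitly needs $\log\Xi$ to exist and be even.
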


\begin{proof}
Taking logarithms formally,
\[
\log Z(u)=A(u),
\]
which admits a unique decomposition
\[
A(u)=A_{\mathrm{even}}(u)+A_{\mathrm{odd}}(u).
\]
Setting
\[
\Xi(u):=\exp(A_{\mathrm{even}}(u)),
\qquad
O(u):=A_{\mathrm{odd}}(u)
\]
yields
\[
Z(u)=\exp(A_{\mathrm{even}}(u))\exp(A_{\mathrm{odd}}(u))
=\Xi(u)\exp(O(u)).
\]
Uniqueness follows from the uniqueness of the odd--even decomposition.
\end{proof}

\subsection{Formal Completion of the Euler Product}

We now apply evenization to the truncated formal Euler product.

Let $\mathcal{E}_P(T)$ be the truncated formal Euler product
defined in Section~3.
Introduce a shifted variable
\[
T=\tfrac12+u,
\]
and consider the formal power series
\[
Z_P(u):=\mathcal{E}_P\!\left(\tfrac12+u\right).
\]

\begin{definition}
The \emph{formally completed truncated zeta function}
is defined by
\[
\xi_P(u):=\mathcal{E}(Z_P)(u)
=
Z_P(u)\exp\!\bigl(-(\log Z_P(u))_{\mathrm{odd}}\bigr).
\]
\end{definition}

\begin{proposition}
The completed series $\xi_P(u)$ is even:
\[
\xi_P(u)=\xi_P(-u).
\]
\end{proposition}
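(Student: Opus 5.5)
The plan is to reduce the statement to the existence and uniqueness theorem for evenization, once we know that $Z_P(u)$ is an invertible formal power series with a well-defined formal logarithm.

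First we check invertibility. The series $Z_P(u)=\prod_{p\le P}(1-p^{-1/2}e^{-(\log p)u})^{-1}$ is a finite product, and each factor is a power series in $u$ over $\mathbb{R}$. Expanding $e^{-(\log p)u}$ termwise gives each factor the constant term $(1-p^{-1/2})^{-1}\neq 0$. Hence $Z_P(0)=\prod_{p\le P}(1-p^{-1/2})^{-1}\in\mathbb{R}^\times$, and $Z_P$ is invertible. Its formal logarithm is then well defined, as $\log Z_P(0)+\log\bigl(Z_P(u)/Z_P(0)\bigr)$, with the second term computed by $\ell_{\times}$ applied to a series with zero constant term. The proposition on the truncated logarithm gives it explicitly as $\sum_{p\le P}\sum_{k\ge1}\frac1k p^{-k/2}e^{-k(\log p)u}$. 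This is the one point that needs care: expanded in $u$, this is an infinite sum over $k$, but for each fixed power $u^m$ the coefficient $\sum_k k^{m-1}(\log p)^m p^{-k/2}/m!$ converges. Equivalently, one can work with $\log(1-p^{-1/2}e^{-(\log p)u})$ directly as the composition of $\log(1-x)$, expanded around $x=p^{-1/2}$, with the series $p^{-1/2}(e^{-(\log p)u}-1)$, which has no constant term. So $A(u):=\log Z_P(u)\in\mathbb{R}[[u]]$ is legitimate.

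Next we apply the decomposition. Write $A=A_{\mathrm{even}}+A_{\mathrm{odd}}$. By the definition of $\xi_P$ and the additivity of formal $\exp$ and $\log$ on series whose arguments differ by elements with zero constant term,
\[
\xi_P(u)=\exp(A(u))\exp(-A_{\mathrm{odd}}(u))=\exp\bigl(A_{\mathrm{even}}(u)\bigr).
\]
Since $A_{\mathrm{odd}}(0)=0$, $\exp(-A_{\mathrm{odd}})$ is a genuine formal series, and the identity $\exp(B)\exp(C)=\exp(B+C)$ holds formally. This is exactly the identification $\Xi=\mathcal{E}(Z)$ in the existence and uniqueness theorem.

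Finally, substitution $u\mapsto -u$ is a ring automorphism of $\mathbb{R}[[u]]$ that commutes with formal $\exp$, because it is continuous for the $u$-adic topology. Therefore
\[
\xi_P(-u)=\exp\bigl(A_{\mathrm{even}}(-u)\bigr)=\exp\bigl(A_{\mathrm{even}}(u)\bigr)=\xi_P(u).
\]
The only real obstacle is the well-definedness of $\log Z_P$ addressed in the first step. Everything after that is the formal algebra already recorded in the existence and uniqueness theorem for evenization.
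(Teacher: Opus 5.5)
Your proof is correct and follows the same route as the paper. Both arguments identify $\log\xi_P$ with $(\log Z_P)_{\mathrm{even}}$ and conclude that $\xi_P=\exp\bigl((\log Z_P)_{\mathrm{even}}\bigr)$ is invariant under $u\mapsto -u$. What you add is an explicit justification that $\log Z_P$ exists, since $Z_P(0)=\prod_{p\le P}(1-p^{-1/2})^{-1}>0$, and that $\exp$ commutes with $u\mapsto -u$; the paper leaves both points implicit. The explicit series for $\log Z_P$ is not needed for evenness (and its $u^m$-coefficient also carries a sign $(-1)^m$, which you dropped).
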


\begin{proof}
By construction, $\log\xi_P(u)$ equals the even part
of $\log Z_P(u)$.
Hence $\log\xi_P(u)$ is an even formal power series,
and so is $\xi_P(u)$.
\end{proof}

\subsection{Formal Functional Symmetry}

\begin{remark}
The symmetry $u\mapsto -u$ of $\xi_P(u)$
formally mimics the functional equation of the Riemann zeta function.
Importantly, this symmetry is obtained here
without analytic continuation or Gamma factors.
It arises purely from an algebraic normalization
that removes the odd component of the logarithm.
\end{remark}

\subsection{Conceptual Interpretation}

\begin{remark}
From the formal group viewpoint,
evenization may be interpreted as a strict normalization
of the logarithmic coordinate,
analogous to choosing a symmetric coordinate
on the formal group.
This normalization isolates the intrinsic,
coordinate-independent content of the Euler product
after linearization.
\end{remark}

The formal completion introduced in this section
provides the structural foundation for the Gaussian limit
and cumulant hierarchy analyzed in subsequent sections.

\section{Normalization and the Cumulant Hierarchy}

In this section we analyze the structure of the formally completed
Euler product after normalization.
We show that the quadratic term dominates after rescaling,
leading to a Gaussian leading behavior,
while higher-order terms form a natural hierarchy of cumulants.

\subsection{Even Logarithmic Expansion}

Let $\xi_P(u)$ be the formally completed truncated zeta function
defined in Section~4.
Since $\xi_P(u)$ is even, its logarithm admits an expansion
of the form
\[
\log \xi_P(u)
=
a_0(P)+a_2(P)u^2+a_4(P)u^4+a_6(P)u^6+\cdots.
\]

\begin{definition}
The coefficients $a_{2k}(P)$ are called the
\emph{formal cumulants} of the completed truncated zeta function.
\end{definition}

These coefficients depend only on the even part of the logarithmic
expansion and are invariant under odd coordinate changes.

\subsection{Quadratic Dominance and Normalization}

The second-order coefficient plays a distinguished role.

\begin{definition}
Define the normalization factor
\[
\sigma(P):=\sqrt{a_2(P)}.
\]
Define the normalized completed series by
\[
\widetilde{\xi}_P(u)
:=
\exp\!\bigl(-a_0(P)\bigr)\,
\xi_P\!\left(\frac{u}{\sigma(P)}\right).
\]
\end{definition}

\begin{proposition}
The logarithm of the normalized series admits the expansion
\[
\log \widetilde{\xi}_P(u)
=
u^2
+
\sum_{k\ge2}
\kappa_{2k}(P)\,u^{2k},
\qquad
\kappa_{2k}(P)
=
\frac{a_{2k}(P)}{a_2(P)^k}.
\]
\end{proposition}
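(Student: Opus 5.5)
The plan is a direct substitution argument. The only preliminary point is that the rescaling makes sense: we need $a_2(P)$ to be a positive real number, so that $\sigma(P)=\sqrt{a_2(P)}$ is defined and nonzero. I would first check this from the explicit formula for $a_2(P)$ in the finite cutoff lemma. There $a_2(P)=\tfrac12\sum_{p\le P}(\log p)^2p^{-1/2}(1-p^{-1/2})^{-2}$ is a finite sum of strictly positive terms, and since $P\ge2$ the sum is nonempty (it contains at least $p=2$). Hence $a_2(P)>0$ and $\sigma(P)>0$.

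Next, I would work directly with the expansion $\log\xi_P(u)=\sum_{k\ge0}a_{2k}(P)u^{2k}$ from the even logarithmic expansion. Its evenness comes from the proposition that $\xi_P$ is even, via the existence and uniqueness of evenization. Substituting $u\mapsto u/\sigma(P)$ is a ring endomorphism of $\mathbb{R}[[u]]$ that scales the coefficient of $u^n$ by $\sigma(P)^{-n}$. It commutes with $\exp$ and $\log$ on series with invertible constant term, because it is continuous for the $u$-adic topology and multiplicative. Therefore
\[
\log\xi_P\!\left(\frac{u}{\sigma(P)}\right)=\sum_{k\ge0}\frac{a_{2k}(P)}{\sigma(P)^{2k}}u^{2k}
=\sum_{k\ge0}\frac{a_{2k}(P)}{a_2(P)^{k}}u^{2k}.
\]

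Multiplying by $\exp(-a_0(P))$ adds $-a_0(P)$ to the logarithm. This uses $\log(e^{c}F)=c+\log F$ for a constant $c$ and a series $F$ with invertible constant term. It kills the $k=0$ term. The $k=1$ term becomes $a_2(P)/a_2(P)=1$, and for $k\ge2$ we obtain the coefficient $\kappa_{2k}(P)=a_{2k}(P)/a_2(P)^k$, as claimed.

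No step is a genuine obstacle. The only points needing care are the legitimacy of the rescaling, which is the positivity of $a_2(P)$, and the compatibility of substitution with the formal $\log$ and $\exp$. I would dispatch the latter by noting that substitution by a scalar multiple of $u$ preserves the augmentation and commutes with the defining power series of $\log(1+X)$ and $e^X-1$.
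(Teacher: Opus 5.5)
Your proposal is correct and follows the paper's own argument: substitute $u\mapsto u/\sigma(P)$ into the even expansion of $\log\xi_P(u)$, remove the constant $a_0(P)$, and use $\sigma(P)^2=a_2(P)$ to read off $\kappa_{2k}(P)=a_{2k}(P)/a_2(P)^k$. You also check that $a_2(P)>0$ from the explicit finite-cutoff formula, so $\sigma(P)$ is a well-defined positive real, and that rescaling $u$ by a scalar commutes with the formal $\log$ and $\exp$; the paper leaves both points implicit, and you handle them correctly.
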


\begin{proof}
Substituting $u\mapsto u/\sigma(P)$ and subtracting the constant term
yields
\[
\log \widetilde{\xi}_P(u)
=
\frac{a_2(P)}{\sigma(P)^2}u^2
+
\sum_{k\ge2}
\frac{a_{2k}(P)}{\sigma(P)^{2k}}u^{2k}.
\]
Since $\sigma(P)^2=a_2(P)$, the stated formula follows.
\end{proof}

The coefficient of $u^2$ is thus normalized to unity,
and all higher coefficients are expressed relative to it.

\subsection{Gaussian Leading Behavior}

\begin{definition}
The \emph{Gaussian approximation} associated with $\widetilde{\xi}_P$
is the formal power series
\[
G(u):=\exp(u^2).
\]
\end{definition}

\begin{remark}
The appearance of a Gaussian leading term is a direct consequence
of linearization via the formal group logarithm
and subsequent quadratic normalization.
No probabilistic assumption is involved.
\end{remark}

\subsection{Cumulant Hierarchy}

\begin{definition}
The quantities $\kappa_{2k}(P)$, $k\ge2$, are called the
\emph{higher cumulants}.
They measure deviations from the Gaussian approximation.
\end{definition}

\begin{proposition}
If $\kappa_{2k}(P)\to 0$ as $P\to\infty$ for all $k\ge2$,
then the normalized completed series
$\widetilde{\xi}_P(u)$ converges formally to the Gaussian $G(u)$.
\end{proposition}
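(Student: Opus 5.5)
The plan is to interpret ``converges formally'' as convergence in the $u$-adic (coefficientwise) topology on $\mathbb{Q}[[u]]$, or more precisely $\mathbb{R}[[u]]$: a sequence of series converges if and only if each coefficient converges. With this convention the statement reduces to two steps. First, $\log\widetilde{\xi}_P(u)$ converges formally to $u^2$. Second, $\exp$ is continuous for coefficientwise convergence on series with vanishing constant term.

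For the first step, invoke the preceding proposition. It gives
\[
\log\widetilde{\xi}_P(u)=u^2+\sum_{k\ge2}\kappa_{2k}(P)u^{2k},
\]
with no constant term and no odd terms. The constant term is absent because $a_0(P)$ was subtracted; the odd terms are absent because $\xi_P$ is even by the evenness proposition of the formal completion section. Under the hypothesis $\kappa_{2k}(P)\to0$ for every $k\ge2$, the coefficient of $u^n$ in $\log\widetilde{\xi}_P(u)-u^2$ therefore tends to $0$ for every $n$. This is exactly the statement that $\log\widetilde{\xi}_P\to u^2$ formally. I will also check that $a_2(P)>0$, so that $\sigma(P)$ is a well-defined real number. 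This follows from the explicit formula for $a_2(P)$ in the finite cutoff lemma, where every summand is positive.

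For the second step, write $f_P(u):=\log\widetilde{\xi}_P(u)$, so that $\widetilde{\xi}_P=\exp(f_P)$ and $f_P(0)=0$. Expanding $\exp(f_P)=\sum_{j\ge0}f_P^j/j!$, the coefficient of $u^n$ in $\exp(f_P)$ involves only $j\le n$ and only the coefficients of $f_P$ in degrees $\le n$. Hence it is a fixed polynomial with rational coefficients in $\kappa_2(P)=1,\kappa_4(P),\dots,\kappa_{2\lfloor n/2\rfloor}(P)$. Polynomials are continuous, so this coefficient converges to the same polynomial evaluated at $\kappa_{2k}=0$, which is the coefficient of $u^n$ in $\exp(u^2)=G(u)$. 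This gives $\widetilde{\xi}_P\to G$ coefficientwise.

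The main obstacle is not technical but definitional: the paper does not specify what ``converges formally'' means. The argument hinges on fixing coefficientwise convergence and on the observation that each coefficient depends on only finitely many cumulants. Without that finiteness one would need uniformity in $k$, which the hypothesis does not supply; for instance, sup-norm convergence of the coefficient sequence would not follow.
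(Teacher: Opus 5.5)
Your proposal is correct and follows the same route as the paper, whose proof only observes that the higher coefficients of $\log\widetilde{\xi}_P(u)$ vanish termwise. You also make explicit the step the paper leaves implicit: each coefficient of $\exp(f_P)$ is a fixed polynomial in finitely many $\kappa_{2k}(P)$, so termwise convergence carries over from the logarithm to $\widetilde{\xi}_P$ itself. The only quibble is terminological: what you use is coefficientwise convergence with the usual topology on $\mathbb{R}$, not the $u$-adic topology (in which low-order coefficients would have to become exactly constant), but your stated definition is the right one and your argument uses it correctly.
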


\begin{proof}
Under the stated condition, the higher-order terms in
$\log \widetilde{\xi}_P(u)$ vanish termwise,
leaving only the quadratic contribution.
\end{proof}

\subsection{Conceptual Interpretation}

\begin{remark}
From the formal group perspective,
the cumulant hierarchy reflects the Taylor expansion
of the formal group logarithm
after symmetric normalization.
The quadratic term represents the leading infinitesimal geometry,
while higher cumulants encode finer arithmetic structure
arising from prime distribution irregularities.
\end{remark}

This hierarchy provides the structural mechanism underlying
the Gaussian limit observed in the cutoff model,
and serves as the interface between formal group theory
and fluctuation phenomena in prime number theory.

\section{Fluctuation Decomposition and the Chebyshev Error}

In this section we relate the higher cumulants
of the normalized completed Euler product
to classical error terms in prime number theory.
In particular, we show that deviations from the Gaussian leading term
are governed by weighted integrals of the Chebyshev error function.

\subsection{Logarithmic Expansion Revisited}

Recall that for the truncated Euler product we have
\[
\log \zeta_P(T)
=
\sum_{p\le P}\sum_{k\ge1}\frac{1}{k}p^{-kT}.
\]

Setting $T=\tfrac12+u$ and expanding around $u=0$,
we obtain a formal Taylor expansion
\[
\log \zeta_P\!\left(\tfrac12+u\right)
=
\sum_{n\ge0} c_n(P) u^n,
\]
where the coefficients are given explicitly by
\[
c_n(P)
=
\frac{(-1)^n}{n!}
\sum_{p\le P}\sum_{k\ge1}
\frac{(\log p)^n}{k}\,
p^{-k/2}.
\]

After evenization, only the even coefficients contribute
to the cumulant hierarchy.

\subsection{Reduction to Prime Sums}

The dominant contribution to the cumulants
comes from the $k=1$ term in the logarithmic expansion.
Higher $k$ terms are exponentially suppressed
by factors $p^{-k/2}$.

Thus, up to lower-order corrections,
\[
a_{2m}(P)
\approx
\frac{1}{(2m)!}
\sum_{p\le P}
(\log p)^{2m}\,p^{-1/2}.
\]

This reduces the structural analysis of cumulants
to weighted prime sums.

\subsection{Chebyshev Function and Error Term}

Introduce the Chebyshev function
\[
\theta(x)=\sum_{p\le x}\log p,
\]
and its error term
\[
E(x)=\theta(x)-x.
\]

Using partial summation, one obtains
\[
\sum_{p\le P}
(\log p)^{2m} p^{-1/2}
=
\int_{2}^{P}
(\log x)^{2m} x^{-1/2}\, d\theta(x).
\]

Splitting $\theta(x)=x+E(x)$ gives
\[
\int_{2}^{P}
(\log x)^{2m} x^{-1/2}\, dx
+
\int_{2}^{P}
(\log x)^{2m} x^{-1/2}\, dE(x).
\]

\subsection{Bulk and Boundary Contributions}

The first integral represents the bulk (regular) contribution.
It determines the main asymptotic growth of the cumulants.

The second integral encodes fluctuations arising from
the Chebyshev error term.
Integration by parts yields
\[
\int_{2}^{P}
(\log x)^{2m} x^{-1/2}\, dE(x)
=
(\log P)^{2m} P^{-1/2} E(P)
-
\int_{2}^{P}
E(x)\, d\!\left((\log x)^{2m} x^{-1/2}\right).
\]

This decomposes the fluctuation contribution into:

\begin{itemize}
\item a boundary term involving $E(P)$,
\item a bulk integral weighted by $E(x)$.
\end{itemize}

\subsection{Structural Consequences}

\begin{proposition}
The higher cumulants $\kappa_{2m}(P)$ admit a decomposition
\[
\kappa_{2m}(P)
=
\kappa_{2m}^{\mathrm{bulk}}(P)
+
\kappa_{2m}^{\mathrm{fluct}}(P),
\]
where the fluctuation part is governed by weighted integrals
of the Chebyshev error function $E(x)$.
\end{proposition}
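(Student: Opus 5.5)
The plan is to work at the level of the unnormalized even coefficients $a_{2m}(P)$ and only then divide by $a_2(P)^m$. Since evenization keeps the even part of $\log Z_P(u)$, the coefficient $a_{2m}(P)$ is exactly the even Taylor coefficient $c_{2m}(P)$ of $\log\zeta_P(\tfrac12+u)$ computed above. Summing over $k$ shows that it is a finite prime sum:
\[
a_{2m}(P)=\sum_{p\le P}\phi_{2m}(p)\log p,\qquad
\phi_{2m}(x):=\frac{(\log x)^{2m-1}}{(2m)!}\sum_{k\ge1}k^{2m-1}x^{-k/2}.
\]
The weight $\phi_{2m}$ is smooth on $[2,\infty)$, because the series in $k$ converges uniformly with all derivatives for $x\ge2$. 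Using the full weight rather than only the $k=1$ term means no approximation is needed. This is the Stieltjes representation $a_{2m}(P)=\int_2^P\phi_{2m}(x)\,d\theta(x)$ of the earlier lemma, and the integral is taken over $[2^-,P]$ so that the prime $2$ is counted.

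Next I would split $d\theta=dx+dE$ as in the boundary--bulk theorem of the minimal model. This gives $a_{2m}(P)=A_{2m}^{\mathrm{main}}(P)+A_{2m}^{\mathrm{fluc}}(P)$, where
\[
A_{2m}^{\mathrm{main}}(P)=\int_2^P\phi_{2m}(x)\,dx,\qquad
A_{2m}^{\mathrm{fluc}}(P)=\phi_{2m}(P)E(P)-\phi_{2m}(2)E(2^-)-\int_2^P E(x)\phi_{2m}'(x)\,dx .
\]
The formula for $A_{2m}^{\mathrm{fluc}}$ comes from integrating by parts, and the endpoint constant at $2$ is harmless. So every $a_{2m}$, including $a_2$, equals a smooth main term plus a linear functional of $E$ that has a boundary part and a bulk part.

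The main obstacle is the normalization. The quantity $\kappa_{2m}=a_{2m}/a_2^m$ is nonlinear, so a clean additive splitting is a matter of definition rather than something forced. I would set
\[
\kappa_{2m}^{\mathrm{bulk}}(P):=\frac{A_{2m}^{\mathrm{main}}(P)}{A_2^{\mathrm{main}}(P)^m},\qquad
\kappa_{2m}^{\mathrm{fluct}}(P):=\kappa_{2m}(P)-\kappa_{2m}^{\mathrm{bulk}}(P).
\]
To justify saying that the fluctuation part is "governed by weighted integrals of $E$", I would write $a_2=A_2^{\mathrm{main}}(1+\delta)$ with $\delta:=A_2^{\mathrm{fluc}}/A_2^{\mathrm{main}}$. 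This gives the exact identity
\[
\kappa_{2m}^{\mathrm{fluct}}
=\frac{A_{2m}^{\mathrm{fluc}}}{a_2^{m}}
+\frac{A_{2m}^{\mathrm{main}}}{(A_2^{\mathrm{main}})^m}\Bigl((1+\delta)^{-m}-1\Bigr).
\]
Both terms vanish identically when $E\equiv0$. When $\delta$ is small, which holds for large $P$ by the prime number theorem since $E(x)=o(x)$ makes $\delta\to0$, they expand to first order as $\bigl(A_{2m}^{\mathrm{fluc}}-m\,\kappa_{2m}^{\mathrm{bulk}}\,(A_2^{\mathrm{main}})^{m-1}A_2^{\mathrm{fluc}}\bigr)/(A_2^{\mathrm{main}})^m$. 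That first-order expression is again a boundary term in $E(P)$ plus $\int_2^P E(x)\psi_{2m}(x)\,dx$, with the explicit weight $\psi_{2m}=-(\phi_{2m}'-m\kappa_{2m}^{\mathrm{bulk}}(A_2^{\mathrm{main}})^{m-1}\phi_2')/(A_2^{\mathrm{main}})^m$.

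The delicate points are uniformity in $P$ and whether $\kappa^{\mathrm{bulk}}$ should be defined with $a_2$ or with $A_2^{\mathrm{main}}$ in the denominator. I would choose $A_2^{\mathrm{main}}$, so that the bulk part depends on no prime data at all. All of the arithmetic is then pushed into the fluctuation term, which the displayed identity expresses exactly as a function of weighted $E$-integrals.
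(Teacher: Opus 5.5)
Your proof is correct. It follows the same skeleton as the paper's one-line proof: represent $a_{2m}(P)$ as a Stieltjes integral against $\theta$, split $d\theta=dx+dE$, integrate by parts, then normalize by $a_2(P)$. It differs from the paper in two places.

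\textbf{The reduction step.} The paper's ``previous reduction'' is only approximate. It keeps only the $k=1$ term and writes $\sum_{p\le P}(\log p)^{2m}p^{-1/2}=\int_2^P(\log x)^{2m}x^{-1/2}\,d\theta(x)$. That identity also has the wrong weight: since $d\theta$ already carries a factor $\log p$, the weight should be $(\log x)^{2m-1}x^{-1/2}$. Your weight $\phi_{2m}(x)=\frac{(\log x)^{2m-1}}{(2m)!}\sum_{k\ge1}k^{2m-1}x^{-k/2}$ makes $a_{2m}(P)=\int_{2^-}^P\phi_{2m}\,d\theta$ exact. Your factor $k^{2m-1}$ is also the correct one; the paper's displayed formula for $c_n(P)$ omits the factor $k^n$. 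You keep the endpoint term $-\phi_{2m}(2)E(2^-)$, which the paper drops.

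\textbf{The normalization step.} The paper never says how the nonlinear division by $a_2(P)^m$ yields an additive split. The implicit reading is $\kappa_{2m}^{\mathrm{bulk}}=A_{2m}^{\mathrm{main}}/a_2^m$ and $\kappa_{2m}^{\mathrm{fluct}}=A_{2m}^{\mathrm{fluc}}/a_2^m$. That makes the fluctuation part a single weighted $E$-integral up to a scalar, but it leaves prime data in the bulk part through $a_2$. Your choice of $A_2^{\mathrm{main}}$ in the denominator makes the bulk part free of prime data. The price is the extra term $\kappa_{2m}^{\mathrm{bulk}}\bigl((1+\delta)^{-m}-1\bigr)$. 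You handle it with the exact identity and its first-order linearization, using $\delta\to0$ from the prime number theorem.

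Either splitting proves the proposition, which is loosely stated. Yours is the more careful and more informative of the two.
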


\begin{proof}
This follows from the previous reduction
and normalization by the quadratic term $a_2(P)$.
\end{proof}

\subsection{Relation to Classical Error Bounds}

Classical estimates for $E(x)$ yield corresponding bounds
for the higher cumulants.

For example, under the Riemann hypothesis,
\[
E(x)=O(x^{1/2}\log^2 x),
\]
which implies strong decay of the normalized cumulants.

Without assuming the Riemann hypothesis,
weaker bounds for $E(x)$ translate into correspondingly
weaker decay rates.

\subsection{Interpretation}

The cumulant hierarchy thus provides
a structural reorganization of prime-distribution irregularities.
The Gaussian leading term arises from the regular part of $\theta(x)$,
while deviations from Gaussian behavior are controlled
by the oscillatory component $E(x)$.

From the formal group viewpoint,
this shows that prime-distribution fluctuations enter
only after linearization and normalization,
appearing as higher-order corrections
to the quadratic infinitesimal geometry.

\section{Formal Group Interpretation of the Gaussian Limit}

In this section we give a formal group--theoretic interpretation
of the Gaussian leading behavior obtained after normalization.
We emphasize that the Gaussian structure arises purely from
linearization and quadratic normalization,
and does not rely on probabilistic assumptions.

\subsection{Infinitesimal Geometry of the Multiplicative Formal Group}

Let $\widehat{\mathbb{G}}_m$ be the multiplicative formal group
with formal group logarithm
\[
\ell_{\times}(X)=\log(1+X).
\]
The logarithm identifies a neighborhood of the identity
of $\widehat{\mathbb{G}}_m$
with its tangent space at the origin.

From this viewpoint, the passage
\[
\mathcal{E}_P(T)
\;\longmapsto\;
\ell_{\times}\!\bigl(\mathcal{E}_P(T)-1\bigr)
\]
should be interpreted as a map
from a multiplicative geometric object
to its infinitesimal linear approximation.

\subsection{Quadratic Form as Leading Infinitesimal Structure}

After evenization and expansion around $u=0$,
the logarithm of the completed Euler product takes the form
\[
\log \xi_P(u)
=
a_0(P)+a_2(P)u^2+a_4(P)u^4+\cdots.
\]

The coefficient $a_2(P)$ defines a canonical quadratic form
on the one-dimensional tangent space.
Normalizing by $\sigma(P)=\sqrt{a_2(P)}$
amounts to fixing this quadratic form to unit scale.

Thus the normalized logarithm
\[
\log \widetilde{\xi}_P(u)
=
u^2+\kappa_4(P)u^4+\cdots
\]
should be viewed as an expansion
around a canonical infinitesimal metric.

\subsection{Emergence of the Gaussian}

Exponentiating the quadratic form yields
\[
\exp(u^2),
\]
which we identify as the Gaussian leading term.
In this formal group framework,
the Gaussian arises as the exponential
of the normalized quadratic infinitesimal geometry.

This phenomenon is universal:
whenever a commutative formal group
is linearized via its logarithm
and normalized by its quadratic coefficient,
the resulting leading behavior is Gaussian.

\subsection{Higher Cumulants as Curvature Corrections}

The higher cumulants $\kappa_{2m}(P)$, $m\ge2$,
measure deviations from the quadratic approximation.
They encode higher-order information
about the formal group element
beyond its tangent space.

From a geometric perspective,
these terms may be interpreted as curvature-like corrections
to the flat infinitesimal geometry.
Their decay reflects the increasing dominance
of the quadratic structure at small scales.

\subsection{Conceptual Summary}

The Gaussian limit obtained in this work
is not a probabilistic artifact
but a structural consequence
of formal group linearization.
The Euler product defines a multiplicative formal group element;
its logarithm identifies an additive tangent space;
and quadratic normalization fixes a canonical infinitesimal metric.

Within this framework,
the Gaussian represents the universal leading object
associated with the infinitesimal geometry
of the Euler product,
while higher cumulants encode arithmetic deviations
from this universal form.

\section{Conclusion}

In this paper we have proposed a formal group--theoretic framework
for reorganizing the structure of the Riemann zeta function,
centered on its Euler product.
By treating the Euler product as an element of the multiplicative
formal group and its logarithm as the associated formal group logarithm,
we have shown that several familiar analytic features
admit a purely algebraic and infinitesimal interpretation.

The first part of the paper developed a minimal concrete model,
based on a finite cutoff of the Euler product.
Within this entirely algebraic setting,
we introduced a formal completion via evenization
and a natural normalization procedure.
These steps led inevitably to a Gaussian leading term
in the logarithmic expansion,
together with a hierarchy of higher cumulants.
We emphasized that this Gaussian structure
is not probabilistic in origin,
but reflects the universal quadratic geometry
of the tangent space arising from formal group linearization.

We further showed that the higher cumulants
encode arithmetic fluctuations beyond this quadratic core.
In particular, they admit an explicit decomposition
governed by weighted integrals of the Chebyshev error function
$\theta(x)-x$.
This clarifies how irregularities in the distribution of primes
enter the formal expansion only after linearization and normalization,
as higher-order corrections to an otherwise universal structure.

The second part of the paper provided a systematic
formal group axiomatization of these constructions.
Evenization was interpreted as a canonical normalization
in logarithmic coordinates,
and the Gaussian limit was identified
with the universal quadratic structure
of the infinitesimal geometry of the multiplicative formal group.
This perspective unifies the concrete cutoff model
and the cumulant hierarchy
within a coherent algebraic and geometric framework.

Throughout this work,
the role of the logarithm has been structural rather than technical.
As emphasized in Appendix~C,
it provides the canonical bridge
from multiplicative arithmetic data
to additive infinitesimal geometry.
Within this bridge,
the Euler product becomes amenable to structural analysis,
and arithmetic complexity manifests itself
as controlled deviations from a universal quadratic core.

The approach developed here is deliberately non-spectral.
Rather than postulating an operator whose spectrum
realizes the nontrivial zeros of the zeta function,
we have focused on reorganizing the Euler product itself,
separating its dominant infinitesimal geometry
from higher-order arithmetic fluctuations.
In this sense,
the present framework should be viewed
not as a resolution of the Riemann hypothesis,
but as a structural lens
through which constraints related to it may be more clearly formulated.

Finally, we note that the purely multiplicative nature
of the Euler product,
together with its formal group linearization,
suggests possible connections with ideas
from absolute arithmetic and geometry over $\mathbb{F}_1$.
The speculative discussion in the final appendix
is intended only to indicate such a horizon.
The main contribution of this paper
lies in establishing a coherent formal framework
in which multiplicative arithmetic,
infinitesimal geometry,
Gaussian universality,
and prime-distribution fluctuations
can be discussed within a single structural perspective.
\subsection{Deninger's Cohomological Interpretation}

Deninger proposed a cohomological framework in which zeta functions
are expressed as regularized determinants of geometric or dynamical
operators.
In its most ambitious form, this approach seeks to realize the
Riemann zeta function as
\[
\zeta(s)=\prod_{i}\det\nolimits_\infty\bigl(s-\Theta\mid H^i\bigr)^{(-1)^{i+1}},
\]
where $\Theta$ is a hypothetical ``Frobenius-type'' operator acting on
cohomology groups associated with an arithmetic space.

The conceptual strength of Deninger's program lies in its
analogy with the Weil conjectures and its attempt to interpret
the Riemann hypothesis as a spectral symmetry condition.
However, this framework presupposes the existence of
geometric objects and operators that are not yet constructed,
and analytic continuation and regularization play an essential role.

By contrast, the present work does not posit any underlying
cohomological or dynamical system.
Instead, it operates entirely within a formal algebraic setting,
treating the Euler product as a multiplicative formal group element
and its logarithm as a linearizing map.
The ``functional equation'' symmetry arises here not from spectral
duality but from an explicit algebraic evenization procedure.
Thus, while Deninger's approach is fundamentally geometric,
ours is intrinsically combinatorial and formal.

\subsection{Connes' Noncommutative Geometric and Spectral Approach}

Connes' approach interprets the explicit formula as a trace formula
in noncommutative geometry.
In this framework, the primes appear as lengths of periodic orbits,
and the nontrivial zeros of the zeta function are viewed as spectral
data of a noncommutative space, often described via the ad\`ele class
space.

A defining feature of Connes' program is its emphasis on
spectral realization:
the Riemann hypothesis becomes equivalent to a positivity or
unitarity condition for a certain operator.
This approach relies heavily on harmonic analysis,
trace formulas, and operator algebras.

In contrast, the formal group perspective developed in this paper
does not introduce operators or Hilbert spaces.
The zeros of the zeta function do not appear as spectral points
but rather as features encoded indirectly in the asymptotic decay
of higher cumulants in a normalized formal expansion.
Prime fluctuations are not interpreted dynamically but instead
enter through weighted integrals of the Chebyshev error
$\theta(x)-x$.

\subsection{Structural Comparison}

The essential contrast between the two viewpoints may be summarized
as follows:

\begin{center}
\begin{tabular}{l|l}
Formal group perspective & Deninger / Connes approaches \\
\hline
Formal multiplicative object & Spectral or geometric object \\
Logarithmic linearization & Operator trace or determinant \\
Evenization by algebraic correction & Functional equation via duality \\
Cutoff normalization & Regularization and analytic continuation \\
Cumulant decay & Spectral symmetry or positivity
\end{tabular}
\end{center}

While the spectral approaches seek a geometric space whose spectrum
\emph{is} the set of nontrivial zeros,
the present framework reorganizes the Euler product into a hierarchy
of cumulants whose decay rates reflect the distribution of primes.
In this sense, the two approaches are not competing but complementary:
spectral geometry aims at a global eigenvalue interpretation,
whereas the formal group approach isolates local-to-global
structures already present in the Euler product.

\subsection{Perspective and Possible Interactions}

Although the present work is deliberately non-spectral,
one may speculate about potential interactions.
For example, the Gaussian leading term emerging after normalization
suggests a central-limit-type phenomenon for the logarithm of the
formal completed zeta.
In spectral terms, such Gaussian behavior is often associated with
averaging over large degrees of freedom.

From this viewpoint, the formal group framework may be regarded as
a ``pre-spectral'' organization of prime data:
it does not construct an operator whose spectrum realizes the zeros,
but it isolates the algebraic and combinatorial structures that any
such operator would necessarily encode.

We therefore view the formal group approach as orthogonal to,
rather than subsumed by, the Deninger--Connes program.
Its contribution lies in clarifying which aspects of the Riemann
zeta function can be explained purely formally,
before introducing geometry, analysis, or spectral theory.

Finally, it is natural to ask whether the present formal group framework
is related to emerging perspectives on absolute arithmetic,
in particular to geometries over the hypothetical field $\mathbb{F}_1$.
Since the Euler product encodes purely multiplicative data,
and formal groups provide a systematic linearization of multiplicative structures,
one may speculate that a deeper understanding of the ``field with one element''
could clarify structural features underlying the Riemann zeta function.
In this context, the formal group viewpoint may be regarded as
a preliminary algebraic shadow of a more intrinsic $\mathbb{F}_1$-geometric structure.
Exploring whether such an interpretation can impose additional constraints
on fluctuation hierarchies or cumulant decay
remains an intriguing direction for future investigation.

\section{Future Directions}

Several directions naturally emerge from the present formal group framework.
First, a more precise asymptotic analysis of higher cumulants under explicit hypotheses on the Chebyshev error term $E(x)=\theta(x)-x$
may clarify possible relations between fluctuation decay rates
and classical conjectures such as the Riemann hypothesis.
Second, it would be interesting to relate the normalized Gaussian limit
to probabilistic or central-limit-type phenomena in prime number theory.
Third, one may attempt to compare the cumulant hierarchy derived here
with the explicit formula and investigate whether spectral interpretations
can be recovered from the formal expansion.
Finally, extending this approach to more general $L$-functions
could reveal whether the formal group reorganization
captures structural features common to broader automorphic settings.

\appendix
\section{Explicit Formulas for Low-Order Cumulants}

In this appendix we record explicit expressions
for the lowest-order cumulants arising from the
formally completed truncated Euler product.
These formulas illustrate concretely how prime sums
enter the cumulant hierarchy.

\subsection{Quadratic Term}

Recall that
\[
\log \xi_P(u)
=
a_0(P)+a_2(P)u^2+a_4(P)u^4+\cdots.
\]

The quadratic coefficient is given by
\[
a_2(P)
=
\frac12
\sum_{p\le P}
(\log p)^2
\sum_{k\ge1}
k\,p^{-k/2}.
\]

Evaluating the geometric series yields
\[
a_2(P)
=
\frac12
\sum_{p\le P}
(\log p)^2
\frac{p^{-1/2}}{(1-p^{-1/2})^2}.
\]

This coefficient determines the normalization scale
\[
\sigma(P)=\sqrt{a_2(P)}.
\]

\subsection{Quartic Term}

The quartic coefficient takes the form
\[
a_4(P)
=
\frac1{24}
\sum_{p\le P}
(\log p)^4
\sum_{k\ge1}
k^3\,p^{-k/2}.
\]

Using the identity
\[
\sum_{k\ge1} k^3 x^k
=
\frac{x(1+4x+x^2)}{(1-x)^4},
\]
we obtain
\[
a_4(P)
=
\frac1{24}
\sum_{p\le P}
(\log p)^4
\frac{p^{-1/2}(1+4p^{-1/2}+p^{-1})}{(1-p^{-1/2})^4}.
\]

\subsection{Normalized Quartic Cumulant}

The normalized quartic cumulant is
\[
\kappa_4(P)
=
\frac{a_4(P)}{a_2(P)^2}.
\]

This quantity measures the leading deviation
from Gaussian behavior.
Its decay as $P\to\infty$
reflects the increasing dominance of the quadratic term.

\subsection{Higher-Order Terms}

Higher cumulants $a_{2m}(P)$ admit analogous expressions:
\[
a_{2m}(P)
=
\frac{1}{(2m)!}
\sum_{p\le P}
(\log p)^{2m}
\sum_{k\ge1}
k^{2m-1} p^{-k/2}.
\]

Each inner sum can be expressed as a rational function of $p^{-1/2}$,
and the resulting prime sums may be analyzed
using partial summation and classical estimates
for the Chebyshev function.

\subsection{Asymptotic Behavior (Heuristic)}

Assuming the prime number theorem,
the leading behavior of the quadratic term is
\[
a_2(P)
\asymp
\sqrt{P}(\log P)^2,
\]
while
\[
a_4(P)
\asymp
\sqrt{P}(\log P)^4.
\]

Consequently,
\[
\kappa_4(P)
\asymp
\frac{\log P}{\sqrt{P}},
\]
which tends to zero as $P\to\infty$.

\begin{remark}
These estimates are purely heuristic
and are included only to illustrate
the expected scale separation
between the quadratic and higher-order cumulants.
No analytic claim is made.
\end{remark}

\section{A Speculative Note on Formal Groups, $\mathbb{F}_1$, and the Riemann Hypothesis}

The Euler product representation of the Riemann zeta function
is fundamentally multiplicative and does not presuppose any additive structure.
From this viewpoint, it is natural to ask whether the additive features
that dominate classical analytic treatments
are secondary to a more primitive multiplicative geometry.

In approaches to absolute arithmetic,
notably geometries over the hypothetical field $\mathbb{F}_1$,
multiplicative structures are regarded as primary,
while addition is expected to emerge only after base extension.
Formal groups may therefore be interpreted as algebraic devices
that mediate between purely multiplicative data
and additive or linearized structures.

From this perspective, the formal group framework developed in this paper
can be viewed as an algebraic shadow of a potential $\mathbb{F}_1$-geometric structure
underlying the Riemann zeta function.
If one interprets the Riemann hypothesis
as reflecting a hidden symmetry of arithmetic geometry,
one may expect this symmetry to manifest itself
as structural constraints on fluctuation hierarchies
or cumulant decay in the formal expansion.

These considerations are necessarily speculative.
Nevertheless, they suggest that a deeper understanding of
the interplay between formal groups and $\mathbb{F}_1$-geometry
may play a conceptual role in future attempts
to clarify the arithmetic meaning of the Riemann hypothesis.

\appendix
\section{Logarithmic Linearization and Infinitesimal Geometry}

In this appendix we clarify the structural and conceptual role
of the logarithm in the present framework.
While taking the logarithm of the Euler product
may at first appear to be a technical device
for transforming products into sums,
its significance is in fact intrinsic to
the formal group structure underlying
the multiplicative nature of the zeta function.

\subsection*{Multiplicative Structure and Canonical Linearization}

The Euler product
\[
\zeta(s)=\prod_p (1-p^{-s})^{-1}
\]
encodes arithmetic information in multiplicative form.
From the perspective adopted in this paper,
this multiplicative structure is modeled
by the multiplicative formal group
$\widehat{\mathbb{G}}_m$,
whose formal group law is
\[
F(X,Y)=X+Y+XY.
\]

The associated formal group logarithm
\[
\ell_\times(X)=\log(1+X)
\]
provides a canonical isomorphism
between the multiplicative formal group
and the additive formal group
in a neighborhood of the identity.
Thus the passage to the logarithm
is not merely algebraic convenience;
it is the canonical linearization map
determined by the formal group law itself.

In this sense,
the logarithm identifies the intrinsic
additive coordinates of a fundamentally multiplicative object.

\subsection*{From Global Multiplicativity to Infinitesimal Geometry}

Applying the logarithm to the Euler product yields
\[
\log \zeta(s)
=
\sum_{p,k\ge1}\frac{1}{k}p^{-ks}.
\]

Conceptually, this passage may be viewed
as moving from a global multiplicative configuration
to its infinitesimal additive description.
The formal group logarithm identifies
a neighborhood of the identity
with its tangent space,
and thereby replaces multiplicative interaction
by additive superposition.

Within this linearized setting,
the quadratic term becomes
the first nontrivial invariant of the structure.
Higher-order terms measure successive deviations
from this infinitesimal approximation.

\subsection*{Quadratic Normalization and Gaussian Universality}

After evenization and normalization,
the logarithmic expansion assumes the form
\[
\log \widetilde{\xi}(u)
=
u^2+\kappa_4 u^4+\cdots.
\]

The quadratic coefficient defines
a canonical infinitesimal metric
on the tangent space of the formal group.
Exponentiating this quadratic form produces
a Gaussian leading term.

From this viewpoint,
the emergence of Gaussian behavior
is not probabilistic in origin.
It reflects a universal structural phenomenon:
once a multiplicative formal group
is linearized and normalized,
its leading infinitesimal geometry
is necessarily quadratic.

The Gaussian therefore represents
the universal tangent-level approximation
of a multiplicative arithmetic object,
while higher cumulants encode
arithmetically meaningful deviations
beyond this quadratic regime.

\subsection*{Conceptual Perspective}

The role of the logarithm in this work
is thus foundational rather than technical.
It provides the bridge
between multiplicative prime-wise data
and additive infinitesimal geometry.
Through this bridge,
the Euler product becomes accessible
to structural analysis,
and arithmetic fluctuations
appear as higher-order corrections
to a universal quadratic core.
$$ $$

\noindent Takao Inou\'{e}

\noindent Faculty of Informatics

\noindent Yamato University

\noindent Katayama-cho 2-5-1, Suita, Osaka, 564-0082, Japan

\noindent inoue.takao@yamato-u.ac.jp
 
\noindent (Personal) takaoapple@gmail.com (I prefer my personal mail)

\end{document}